\definecolor{liens}{rgb}{1,0,0}
\newtheorem{theo}{Theorem}%[section]
\newtheorem{lem}[theo]{Lemma}
\newtheorem{propo}[theo]{Proposition}
\newtheorem{coro}[theo]{Corollary}
\theoremstyle{definition}
\numberwithin{equation}{section}
\theoremstyle{remark}
\newtheorem{rem}[theo]{Remark}
\newtheorem{exa}[theo]{Example}
\def\det{\mathrm{det}}
\def\Gal{\mathrm{Gal}}
\def\<{\langle}
\def\>{\rangle}
\def\GL{\mathrm{GL}}
\def\C{\mathbb{C}}
\def\N{\mathbb{N}}
\def\n{\eta}
\def\n'{\nu}
\def\tG{\widetilde{G}}
\begin{document}

\sloppy

\title[Hypertranscendence, the exponential case]{Hypertranscendence and  linear difference equations, the exponential case}
\author{Thomas Dreyfus}
\address{
Institut de Math\'ematique de Bourgogne, UMR 5584 CNRS, Universit\'e de Bourgogne, F-21000, Dijon, France}
\email{thomas.dreyfus@math.cnrs.fr}

\thanks{ This project has received funding from  the ANR De rerum natura project (ANR-19-CE40-0018). The IMB receives support from the EIPHI Graduate School (contract ANR-17-EURE-0002). }
\subjclass[2010]{39A06, 12H05}

\date{\today}

\begin{abstract} 
In this paper we study meromorphic  solutions of  linear shift difference equations with coefficients in $\C(x)$  involving the operator $\rho: y(x)\mapsto y(x+h)$, for some $h\in \C^*$. 
We prove that if $f$ is a solution of an algebraic differential equation, then $f$ belongs to a ring that is generated by periodic functions and exponentials.  Our proof is based on the parametrized difference Galois theory initiated by Hardouin and Singer. 
\end{abstract} 

\maketitle
\setcounter{tocdepth}{1}
\tableofcontents
%\dedicatory{u}
%%%%%%%%%%%%%%%%%%%%%%%%%%%%%%%%%%
%%%%%%%%%%%%%%%%%%%%%%%%%%%%%%%%%%

%%%%%%%%%%%%%%%%%%
%%%%%%%%%%%%%%%%%%

\section*{Introduction}
Given a solution of a functional equation we might wonder if it is a solution of another kind of functional equation.  If the two functional equations are too different we would expect that very few functions are solutions of both functional equations.   The goal of this paper is  to state a result in that direction. 
Before recalling the state or the art, let us state our main result and its framework.  \par 
 Let $\rho$ be the automorphism of $\C(x)$ defined by $\rho: y(x)\mapsto y(x+h)$ with $h\in \C^*$.   Let $\mathcal{M}(\C)$ be the field of meromorphic functions on $\C$. 
We say that $f\in \mathcal{M}(\C)$ is differentially algebraic over  $\C(x)$, if there exist  $m\in \N$,  $0\neq P\in \C(x)[X_0,\dots,  X_m]$ such that $P(f, \partial_x f, \dots,  \partial_x^m f)=0$. We say that $f$ is differentially transcendental otherwise. 
 Let $C_h:=\{f\in \mathcal{M}(\C)| \rho(f)=f\}$ be the field of $h$-periodic functions.  The goal of this paper is to prove:

\begin{theo}\label{thm}
Let $n\in\N^*$, and let $A\in \mathrm{GL}_n (\C(x))$.  Let $\mathcal{Y}:= (f_1,\dots, f_n)^{\top}\in (\mathcal{M}(\C))^n$ be a solution of $\rho(Y)=AY$.  If every  $f_i$ is differentially algebraic over $\C(x)$,  then there exist $\ell\in \N^*$,  $\lambda_{1}\dots,\lambda_k \in \C$, such that  for all $1\leq i \leq n$,  $f_i \in C_{\ell h}(x)[e^{\lambda_1 x},\dots, e^{\lambda_k x}]$.  
\end{theo}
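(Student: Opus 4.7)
The plan is to apply the parametrized difference Galois theory of Hardouin and Singer to the $(\rho,\partial)$-difference system $\rho(Y)=AY$, with $\partial=\partial_{x}$. I first work over the $(\rho,\partial)$-field $K := C_{h}(x)$, which is stable under both operators because $C_{h}\subset \mathcal{M}(\C)$ is stable under $\partial_{x}$. The entries of $\mathcal{Y}$ together with a complementary set of meromorphic solutions (chosen from a fundamental solution matrix defined on some domain where $A$ is regular) generate a $(\rho,\partial)$-subring $R$ of $\mathcal{M}(\C)$, which after a standard verification is a parametrized Picard--Vessiot ring for $\rho(Y)=AY$ over $K$. I denote by $G$ the associated parametrized Galois group, a linear $\partial$-algebraic subgroup of $\mathrm{GL}_{n}$ defined over a $\partial$-closure of $C_{h}$.

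Next I translate the hypothesis. The differential algebraicity of every $f_{i}$ over $\C(x)$, hence over $K$, forces $R$ to have $\partial$-transcendence degree zero over $K$; by the Hardouin--Singer correspondence this is equivalent to $G$ being of $\partial$-dimension zero, so $G$ is a classical linear algebraic group. Since the $\partial$-constants of $C_{h}$ are $\C$, a theorem of Cassidy then shows that, up to conjugation over an algebraic extension of $C_{h}$, $G$ lies in $\mathrm{GL}_{n}(\C)$. Replacing $G$ by its identity component $G^{0}$ corresponds, via the Galois correspondence, to adjoining the finitely many elements fixed by $G^{0}$ but not by $G$. Concretely, the constants one adjoins are periodic functions, and a finiteness argument identifies the extension of constants with $C_{\ell h}$ for some $\ell\in\N^{*}$. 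Thus, after replacing $K$ by $C_{\ell h}(x)$, I may assume $G=G^{0}$ is a connected linear algebraic $\C$-subgroup of $\mathrm{GL}_{n}$.

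It remains to prove that a connected PV extension of $C_{\ell h}(x)$ sitting inside $\mathcal{M}(\C)$, all of whose entries are $\partial$-algebraic, is generated by hyperexponentials of the form $e^{\lambda x}$ with $\lambda\in\C$. I would filter $G$ by its derived series and unipotent radical, which breaks $R$ into a tower of one-dimensional hyperexponential and primitive extensions. At each one-dimensional step $\rho(u)=au\cdot u$ or $\rho(u)=u+a$, with $a\in C_{\ell h}(x)$, I invoke a H\"older--type statement: a shift-hyperexponential $u\in\mathcal{M}(\C)$ satisfying $\rho(u)/u\in C_{\ell h}(x)^{*}$ and $\partial$-algebraic over $C_{\ell h}(x)$ must have the form $u = v\cdot e^{\lambda x}$ with $v\in C_{\ell h}(x)^{*}$ and $\lambda\in\C$; an analogous statement rules out primitive extensions except the trivial one. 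Iterating gives the inclusion $R\subset C_{\ell h}(x)[e^{\lambda_{1}x},\dots,e^{\lambda_{k}x}]$, hence the conclusion for each $f_{i}$.

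The main obstacle is the reduction to a torus: one has to exclude both a non-trivial semisimple factor in $G$ and a non-trivial unipotent radical. Killing the unipotent radical corresponds to ruling out genuine shift-logarithms among $\partial$-algebraic meromorphic functions, while ruling out a semisimple piece requires using $\partial$-algebraicity to force commutativity of $G$. Both points are where the specific arithmetic of the shift operator on $\mathcal{M}(\C)$ and the H\"older-type statement carry the weight of the argument; the parametrized Galois formalism reduces the theorem to these two local rigidity facts, which have no direct analogue in the purely algebraic PV setting.
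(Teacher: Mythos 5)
Your proposal breaks at the very first translation step. The theorem assumes only that the entries of the \emph{single} solution vector $\mathcal{Y}$ are differentially algebraic; it says nothing about the other columns of a fundamental matrix. Your claim that this ``forces $R$ to have $\partial$-transcendence degree zero over $K$'', where $R$ is generated by a full fundamental solution matrix, is false. Take $A=\mathrm{Diag}(1,x)$ and $\mathcal{Y}=(1,0)^{\top}$ (with $h=1$): every entry of $\mathcal{Y}$ is constant, but any fundamental matrix contains a nonzero meromorphic solution of $\rho(y)=xy$, i.e.\ essentially the Gamma function, which is differentially transcendental. So the parametrized Galois group need not have $\partial$-dimension zero under the hypothesis, and your entire reduction to a ``classical'' group, and then to a torus, never gets off the ground. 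This is exactly why the paper does not argue globally on the full Picard--Vessiot ring: it first iterates the system (Lemma~\ref{lem2}) to realize $\mathcal{Y}$ as the first column of a fundamental matrix of a connected system, proves the dichotomy of Proposition~\ref{prop: colonne} (an irreducible connected Galois group forces a differentially transcendental entry in \emph{every} column), and then runs an induction on $n$ through a block-triangular reduction, with a separate treatment of the inhomogeneous rank-one (affine) case. (A further unaddressed point: $C_h$ is neither algebraically nor differentially closed, so the Hardouin--Singer theory cannot be invoked over $C_h(x)$ ``after a standard verification''; the paper works over $\widetilde{C_h}(x)$ and then needs an explicit descent step, Lemma~\ref{lem3}, to return to $C_h(x)$.)

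Even setting that aside, the two points you defer as ``local rigidity facts'' are precisely where the mathematical content lies, and they are not consequences of H\"older-type statements for order-one equations. Excluding a nontrivial semisimple part is done in the paper via the Arreche--Singer theorem (Proposition~\ref{propo:AS}: if the identity component of the difference Galois group is semisimple, the $(\rho,\delta)$-Galois group is the whole group), combined with a normalization of the determinant by $e^{dx}$, a primitivity/semisimplicity argument, and the contradiction between differential algebraicity of the entries and connectedness when $n\geq 2$; nothing in your outline supplies an argument ``forcing commutativity of $G$''. Likewise, your claim that $\partial$-dimension zero makes $G$ a classical algebraic group conjugate into $\GL_n(\C)$ is unsupported (finite $\partial$-dimension does not imply algebraicity, and Cassidy's conjugation theorems do not apply in this generality). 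Finally, your tower argument would at best handle a solvable connected group, whereas the paper's affine step ($\rho(y)=y+b$ with $b$ in the exponential ring) requires a genuine telescoping argument over $\widetilde{C_h}(x)[e^{\lambda_1 x},\dots,e^{\lambda_k x}]$ together with a descent of constants, not just the order-one homogeneous H\"older statement you cite. As it stands, the proposal reduces the theorem to unproved assertions that are at least as hard as the theorem itself, and its opening reduction is contradicted by a two-dimensional example.
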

As a straightforward corollary, we find: 
\begin{coro}\label{cor}
Let $f\in \mathcal{M}(\C)$ such that 
$$a_0 f+\dots+ a_n \rho^n f=0,\quad a_i\in \C(x),  \quad a_0, a_n\neq 0.$$  If $f$ is differentially algebraic over $\C(x)$,  then  there exist $\ell\in \N^*$,  $\lambda_{1}\dots,\lambda_k \in \C$, such that  $f \in C_{\ell h}(x)[e^{\lambda_1 x},\dots, e^{\lambda_k x}]$.  
\end{coro}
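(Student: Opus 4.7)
The plan is to reduce the scalar difference equation to a first-order matrix system and then apply Theorem~\ref{thm} directly. Since $a_0$ and $a_n$ are both nonzero, the relation $a_0 f + a_1 \rho f + \dots + a_n \rho^n f = 0$ rewrites as
$$\rho^n f = -a_n^{-1}\bigl(a_0 f + a_1 \rho f + \dots + a_{n-1} \rho^{n-1} f\bigr).$$
Setting $\mathcal{Y} := (f, \rho f, \dots, \rho^{n-1} f)^{\top}$, I would check that $\rho(\mathcal{Y}) = A\mathcal{Y}$, where $A$ is the usual companion matrix (identity block above the last row and last row $(-a_0/a_n, -a_1/a_n, \dots, -a_{n-1}/a_n)$). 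Its determinant equals $\pm a_0/a_n$, which is nonzero by hypothesis, so $A \in \GL_n(\C(x))$, as required by Theorem~\ref{thm}.

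Next, to invoke Theorem~\ref{thm} I need each component $f_i := \rho^{i-1}(f)$ of $\mathcal{Y}$ to be differentially algebraic over $\C(x)$. For this I would use that $\rho$ commutes with $\partial_x$, since $\rho(\partial_x g)(x) = (\partial_x g)(x+h) = \partial_x(\rho g)(x)$. Hence, if a nonzero $P \in \C(x)[X_0, \dots, X_m]$ annihilates $(f, \partial_x f, \dots, \partial_x^m f)$, then the polynomial $\rho^{i-1}(P)$, obtained by applying $\rho^{i-1}$ to the coefficients, is still nonzero and annihilates $(\rho^{i-1} f, \partial_x \rho^{i-1} f, \dots, \partial_x^m \rho^{i-1} f)$. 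Thus every $f_i$ is differentially algebraic over $\C(x)$.

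Theorem~\ref{thm} then yields $\ell \in \N^*$ and $\lambda_1, \dots, \lambda_k \in \C$ such that every $f_i$, and in particular $f = f_1$, lies in $C_{\ell h}(x)[e^{\lambda_1 x}, \dots, e^{\lambda_k x}]$, which is exactly the desired conclusion. The argument is pure bookkeeping once Theorem~\ref{thm} is in hand; there is no serious obstacle, the only minor subtlety being the verification that $\rho$ preserves differential algebraicity, which hinges on the commutation of $\rho$ and $\partial_x$.
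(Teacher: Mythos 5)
Your argument is correct and coincides with the paper's own proof: both form the vector $(f,\rho f,\dots,\rho^{n-1}f)^{\top}$, observe that the commutation of $\rho$ and $\partial_x$ propagates differential algebraicity to each $\rho^{i}(f)$, and apply Theorem~\ref{thm} to the companion system. No gaps; the extra check that the companion matrix is invertible (determinant $\pm a_0/a_n\neq 0$) is a welcome, if routine, detail.
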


\begin{proof}[Proof of Corollary \ref{cor}]
Since $\rho$ and $\partial_x$ commute, for all $i\in \N$,  $\rho^i (f)$ is differentially algebraic over $\C(x)$.   Then,  each entry of $\mathcal{Y}:= (f,\rho (f) \dots, \rho^{n-1}(f))^{\top}\in \mathcal{M}(\C)^n$ is differentially algebraic over $\C(x)$.  Since 

$$
\rho(\mathcal{Y} )=\begin{pmatrix}
0&1&0&\cdots&0\\
0&0&1&\ddots&\vdots\\
\vdots&\vdots&\ddots&\ddots&0\\
0&0&\cdots&0&1\\
-a_{0}/a_n& -a_{1}/a_n&\cdots & \cdots & -a_{n-1}/a_n
\end{pmatrix} \mathcal{Y},
$$ we conclude with Theorem~\ref{thm}. 
\end{proof}

The first proof of differential transcendence  of a function is due to Hölder,  see \cite{Ho}.  The author proved that the Gamma function is differentially transcendental.  His approach mainly uses the functional equation  $\rho (\Gamma)= x\Gamma$ (here $h=1$). \par 
 More recently in \cite{bezivin1993solutions}, it is proved that if $f\in \mathcal{M}(\C)$ is solution of a linear $\rho$-equation and a linear differential equation, then  there exist $\ell\in \N^*$,  $\lambda_{1}\dots,\lambda_k \in \C$, such that  $f \in C_{\ell h  }(x)[e^{\lambda_1 x},\dots, e^{\lambda_k x}]$.  The strategy of the authors is to study the singularities of the solutions of a linear $\rho$-equation,  linear differential equation, and prove that there is an incompatibility, unless the function is sufficiently simple.  Similar results were known for $q$-difference equations ($\rho : y(x)\mapsto y(qx)$, $q\in \C^*$,  $|q|\neq 1$) and Mahler equations (${\rho : y(x)\mapsto y(x^p)}$, $p\in \N_{\geq 2}$),   and were proved in an unified way in \cite{SS16}.  \par 
 The question of showing that a solution of a linear $\rho$-equation is not  solution of an algebraic differential equation is much more complicated.  Contrary to the case of a linear differential equation,  the number of unknowns grows too fast to hope to predict the potential equation in a brute-force search.  Another approach that fails is the study of the singularities, since a solution of an algebraic differential equation may have a very large set of singularities.   \par 
 Old results were known for affine equations of order one of the form ${\rho(y)=ay+b}$, when $\rho$ is  the shift operator considered above,  the  $q$-difference operator or a Mahler operator,  see for instance  \cite{Ho,Mo,Ma30,Ni84,rande1992equations,ishizaki1998hypertranscendency,Ha08,HS08,NG12}.
 \par 
 For higher order equations we need the Galois theory of difference equations to be able to prove similar results.  More precisely, given a linear $\rho$-system, we may associate a Galois group of matrices that encodes the algebraic (resp. algebraic and  differential) relations among the solutions.  The bigger this group is, and fewer relations there are.  So if we are able to prove that the Galois group is sufficiently large, we might expect that the solutions are differentially transcendental. The first attempt of this strategy could be found in \cite{Ha08} for a modern proof of Hölder's theorem of the differential transcendence of the Gamma function.  This approach has been generalized into a complete theory in \cite{HS08} and has been applied to prove that when the Galois group is big, the solutions are differentially transcendental,  see \cite{HS08,DHR18,DHR2,AS17,ArDR18,bostan2020differential,di2023inhomogeneous}.  The general idea behind the latter papers is that when the solutions are differentially algebraic and the Galois group is big,  then the functions are solutions of linear differential equations. We then use the 
results above mentioned of \cite{SS16}    to prove that they are too trivial and force the Galois group to be small,  leading to a contradiction.  The Galois theory of \cite{HS08} has been applied to a very different context to prove in \cite{DHRS} that some generating series of walks in the quarter plane are differentially transcendental.  \par
At this stage, we are able to prove that the solutions of linear $\rho$-equations are differentially transcendental or trivial only when the Galois group is big, or small (affine equations). It remains to treat the medium cases. This is the goal of \cite{adamczewski2021hypertranscendence} where it is proved that when the Galois group is medium, then the $\rho$-equation is equivalent to a smaller one. Then an induction proof on the rank of the equation allows the authors to prove that either the solution is differentially transcendental,  or it belongs to a small field.  The results of \cite{adamczewski2021hypertranscendence} are stated for meromorphic solutions at infinity and the shift operator,  for the $q$-difference operator, and for the Mahler operator.    For the shift case and meromorphic function on $\C$ which is the framework of this paper,  the statement is much more complicated to set for two reasons: 
\begin{itemize}
\item the exponential functions $e^{\lambda x}$, $\lambda\in \C$,  are meromorphic solutions of linear $\rho$-equations and linear differential equations. 
\item the meromorphic functions that are $\ell h$-periodic for some $\ell \in \N^*$ are solutions of linear $\rho$-equations, and some of them are solutions of  differential equations.  
\end{itemize}
Then, the set of functions that are both solutions of a linear $\rho$-equation and  differential equation is bigger than the ground field $\C(x)$. To avoid this  problem,  the authors of  \cite{adamczewski2021hypertranscendence} consider the situation where  $f$ is solution of a linear $\rho$-equation in coefficients in $\C(x)$ and belong to a field of meromorphic function $F\subset \mathcal{M}(\C)$,  with $\rho(F)=F$, $\{f\in F| \rho(f)=f\}=\C$,  and  for all 
$\lambda_{1}\dots,\lambda_k \in \C$,  $F\cap  \C(x,e^{\lambda_1 x},\dots, e^{\lambda_k x})=\C(x)$.  Then it is proved that either $f\in \C(x)$,  or $f$ is differentially transcendental.   Thus,  Theorem \ref{thm} generalizes this result in this context since we avoid the assumptions on $F$.  The strategy of the proof of Theorem~\ref{thm} is  in some sense similar to the one in the recent paper \cite{de2022algebraic} that deals with another framework. \par 
The paper is organized as follows. In Section \ref{secgal} we give a reminder of the difference Galois theory and in Section \ref{secgaldif}, the parametrized difference Galois theory.  In Section \ref{secirred}, we deal with difference Galois groups that are irreducible and connected, and in Section  \ref{secfin} we complete the proof of Theorem~\ref{thm}.

\paragraph{\textbf{Aknowledgment}} The author would like to thank the anonymous referee for the helpful comments.
\section{Difference setting}\label{secgal}

The goal of this section is to give a short review of the main results of difference algebra and difference Galois theory that will be used in this paper.  We refer to \cite{Cohn} for more details on difference algebra and \cite{vdPS97} for more details on difference Galois theory. 
In what follows,  all fields are of characteristic zero,  and all rings are unitary.  \par 
A difference ring is a ring $(R,\rho)$ equipped with a noncyclic automorphism. 
We define similarly  the notion of difference fields,  difference algebras,  etc...  
When no confusion arises,  we will denote by $R$ the difference ring $(R,\rho)$.  The ring of constants is defined by $R^{\rho}:=\{r\in R | \rho(r)=r\}$.  If $R$ is a field,  $R^{\rho}$ is also a field and will be called the field of constants.   
\begin{exa}
If we consider the notations of the introduction,   $(\C(x),\rho)$ and $(\mathcal{M}(\C),\rho)$ are difference fields, and we have $\C(x)^{\rho}=\C$ and $\mathcal{M}(\C)^{\rho}=C_h$. 
\end{exa}
A $\rho$-ideal $I\subset R$ is an ideal such that $\rho (I)\subset I$. We say that the difference ring is $\rho$-simple if the only $\rho$-ideals are $\{0\}$ and $I$.  \par 
Let $\mathbf{k}$ be a difference field.  Let us assume that  $\mathbf{C}:=\mathbf{k}^{\rho}$ is     algebraically closed.  We consider the difference system 
\begin{equation}\label{eq1}
\rho(Y)=AY,  \quad A\in \mathrm{GL}_n(\mathbf{k}). 
\end{equation}
A Picard-Vessiot ring extension for \eqref{eq1} over $\mathbf{k}$ is a difference ring extension $\mathcal{R}|\mathbf{k}$ such that 
\begin{itemize}
\item There exists $U\in \mathrm{GL}_{n}(\mathcal{R})$,  such that $\rho(U)=AU$, such matrix is called a fundamental	 matrix;
\item $\mathcal{R}=\mathbf{k}[U,1/\det(U)]$;
\item $\mathcal{R}$ is a simple difference ring.  
\end{itemize}

A Picard-Vessiot ring extension exists and is unique up to isomorphism of $\mathbf{k}$-$\rho$-algebras.  

Given a Picard-Vessiot ring extension $\mathcal{R}|\mathbf{k}$,  the Picard-Vessiot extension $\mathcal{Q}$ is the total ring of fractions of $\mathcal{R}$.  We have $\mathcal{Q}^{\rho}=\mathcal{R}^{\rho}=\mathbf{k}^{\rho}=\mathbf{C}$.   
 We define the difference Galois group as  the group of ring automorphisms of $\mathcal{Q}$, leaving $\mathbf{k}$ invariant and commuting with $\rho$, that is 
$$\mathrm{Gal}(\mathcal{Q}|\mathbf{k})=\{ \sigma \in \mathrm{Aut}(\mathcal{Q}|\mathbf{k})| \sigma\circ \rho=\rho\circ \sigma\}.$$

For any fundamental matrix $U \in \GL_n(\mathcal{Q})$, an easy computation shows that 
$U^{-1}\sigma(U) \in \GL_{n}(\mathbf{C})$ for all $\sigma \in \Gal(\mathcal{Q}|\mathbf{k})$. 
By  \cite[Theorem~1.13]{vdPS97}, the faithful representation
\begin{eqnarray*}
\Gal(\mathcal{Q}|\mathbf{k}) & \rightarrow & \GL_{n}(\mathbf{C}) \\ 
  \sigma & \mapsto & U^{-1}\sigma(U)
\end{eqnarray*}
identifies $\Gal(\mathcal{Q}|\mathbf{k}) $ with a linear algebraic subgroup $G\subset\GL_{n}(\mathbf{C})$. 
Choosing another fundamental matrix of solutions $U$ leads to a conjugate representation. 

The following result is a slight adaptation of \cite[Lemma 2.3]{DHR18} without the assumption that $C$ is algebraically closed. Although the proof is mostly the same, we are going to give it in order to be self contained.

\begin{lem}\label{lem7}
Let $F$ be a difference field with field of constants that is $\mathbf{C}$. Let $\widetilde{\mathbf{C}}$ be a field extension of $\mathbf{C}$. Consider the ring  $\widetilde{\mathbf{C}}\otimes_{\mathbf{C}} F$, equipped with a structure of difference ring via $\rho(c\otimes f)=c\otimes \rho(f)$, for all $c\in \widetilde{\mathbf{C}}$, $f\in F$. Then, $\widetilde{\mathbf{C}}\otimes_{\mathbf{C}} F$ is a simple difference ring. Furthermore, $\mathcal{Q}$, the  total quotient  ring of $\widetilde{\mathbf{C}}\otimes_{\mathbf{C}} F$, satisfies $\mathcal{Q}^{\rho}=\widetilde{\mathbf{C}}$.  
\end{lem}
 Note that in general, $\widetilde{\mathbf{C}}\otimes_{\mathbf{C}} F$ is not an integral domain.
\begin{proof}
For $c_1,\dots,c_{\kappa}\in \widetilde{\mathbf{C}}$ that are $\mathbf{C}$-linearly independent, for $f_i\in F$, we claim that $0=\sum_{i=1}^{\kappa} c_i \otimes f_i$ implies $f_i=0$ for all $i$. To the contrary, let $c_i\in \widetilde{\mathbf{C}}$ that are $\mathbf{C}$-linearly independent, $f_i\in F$, not all zero, such that $0=\sum_{i=1}^{\kappa} c_i \otimes f_i$. Furthermore, without loss of generality we may assume that  $\kappa$ is minimal, that is if we have  $0=\sum_{i=1}^{\kappa'} c'_i \otimes f'_i$, with $c'_i\in \widetilde{\mathbf{C}}$ that are $\mathbf{C}$-linearly independent, $f'_i\in F$, not all zero, then $\kappa'\geq\kappa$.  By minimality of $\kappa$, $f_{\kappa}\neq 0$. Dividing by $f_{\kappa}$ we may reduce to the case where $f_{\kappa}=1$. Applying $\rho$ we find 
$0=\sum_{i=1}^{\kappa} c_i \otimes \rho(f_i)$, and substracting with the first relation, we obtain $0=\sum_{i=1}^{\kappa} c_i \otimes (\rho(f_i)-f_i)$. Since $f_{\kappa}=1$, $\rho(f_{\kappa})-f_{\kappa}=0$, and by minimality of $\kappa$, we have $\rho(f_i)=f_i$, for all $i$. Then for all $i$, $f_i\in F^{\rho}=\mathbf{C}$ and $0=\sum_{i=1}^{\kappa} c_i  f_i$. This contradicts the fact that the $c_i$ are $\mathbf{C}$-linearly independent. This proves our claim. \par 
Consider $I$ a difference ideal of $\widetilde{\mathbf{C}}\otimes_{\mathbf{C}} F$ different from $(0)$. Let us prove that $I=\widetilde{\mathbf{C}}\otimes_{\mathbf{C}} F$. Let $w=\sum_{i=1}^{\kappa}c_i \otimes f_i\in I$ be a nonzero element of $I$ with $c_i\in \widetilde{\mathbf{C}},f_i \in F$ and $\kappa$ minimal. By minimality of $\kappa$, $f_{\kappa}\neq 0$ and the $c_i$ are $\mathbf{C}$-linearly independent. By replacing  $w$ with $w/f_{\kappa}\in I$ we may reduce to the case where $f_{\kappa}=1$.   Since $I$ is a difference ring, $\rho(w)-w\in I$. But $\rho(w)-w=\sum_{i=1}^{\kappa}c_i \otimes (\rho(f_i)-f_i)=\sum_{i=1}^{\kappa-1}c_i \otimes (\rho(f_i)-f_i)$. If $\rho(w)-w=0$ by the above claim for all $i$, $\rho(f_i)=f_i$. If $\rho(w)-w\neq 0$, 
by minimality of $\kappa$, for all $i$, $\rho(f_i)=f_i$. Then for all $i$, $f_i\in F^{\rho}=\mathbf{C}$ and then $w=\sum_{i=1}^{\kappa}c_i f_i \otimes 1$ is invertible in $\widetilde{\mathbf{C}}\otimes_{\mathbf{C}} F$. Hence $1\in I$ and $I=\widetilde{\mathbf{C}}\otimes_{\mathbf{C}} F$, proving that $\widetilde{\mathbf{C}}\otimes_{\mathbf{C}} F$ is a simple difference ring. \par
Let $c\in \mathcal{Q}^{\rho}$ and let us prove that $c\in \widetilde{\mathbf{C}}$. Consider $I=\{d\in \widetilde{\mathbf{C}}\otimes_{\mathbf{C}} F| cd\in \widetilde{\mathbf{C}}\otimes_{\mathbf{C}}F\}$. Since  $ \widetilde{\mathbf{C}}\otimes_{\mathbf{C}}F$ is a ring we obtain that $I$ is an ideal. Let us prove that it is a difference ideal. Let $d\in I$. Then $cd\in  \widetilde{\mathbf{C}}\otimes_{\mathbf{C}}F$. Since $\rho(c)=c$ and $ \widetilde{\mathbf{C}}\otimes_{\mathbf{C}}F$ is a difference ring, $\rho(cd)=c\rho(d)\in  \widetilde{\mathbf{C}}\otimes_{\mathbf{C}}F$, proving that $\rho(d)\in  \widetilde{\mathbf{C}}\otimes_{\mathbf{C}}F$. Then $I$ is a difference ideal different from $(0)$. Since $\widetilde{\mathbf{C}}\otimes_{\mathbf{C}}F$ is a simple difference ring, we have $I=\widetilde{\mathbf{C}}\otimes_{\mathbf{C}}F$. Then $1\in I$, proving that $c\in \widetilde{\mathbf{C}}\otimes_{\mathbf{C}}F$. 
Let $c=\sum_{i=1}^{\kappa} c_i \otimes f_i\in (\widetilde{\mathbf{C}}\otimes_{\mathbf{C}} F)^{\rho}$ with $c_i\in \widetilde{\mathbf{C}}$, that are $\mathbf{C}$-linearly independent and $f_i\in F$.  
 Let us prove that $c\in \widetilde{\mathbf{C}}$. 
Then, $0=\rho(c)-c=\sum_{i=1}^{\kappa} c_i \otimes (\rho(f_i)-f_i)$. 
With the above claim, $\rho(f_i)-f_i=0$, for all $i$. Hence $f_i\in F^{\rho}=\mathbf{C}$ and then $c\in \widetilde{\mathbf{C}}$.
\end{proof}

\subsection*{Specific results for shift equations}
In this paragraph,  we consider the  field $\C(x)$ equipped with the automorphism $\rho: y(x) \mapsto y(x+h)$,  $h\in \C^*$, and give some specific results. 
\begin{rem}\label{rem1}
Note that $\mathcal{Y}(x)$ is a meromorphic solution of   $\mathcal{Y}(x+h)=A(x)\mathcal{Y}(x)$ if and only if $\mathcal{Z}(x):=\mathcal{Y}(hx)$ is a meromorphic solution of $\mathcal{Z}(x+1)=A(hx)\mathcal{Z}(x)$.  
Furthermore,  an entry of $\mathcal{Y}$  is differentially algebraic over $\C(x)$ if and only if the corresponding entry of  $\mathcal{Z}$ is differentially algebraic  over $\C(x)$. 
 Hence, we may apply the specific results of \cite{Pra,HS08}, originally  stated for $h=1$, for a general $h\in \C^*$ in this paper.  \end{rem}
 Fix $B\in \mathrm{GL}_{n}(\C(x)) $; then there exists  $V \in \mathrm{GL}_{n}(\mathcal{M}(\C))$ such that ${\rho( V)=BV}$ (\cite[Theorem 1]{Pra}).  Let $\overline{C_h}$ be the algebraic closure of $C_h$ and  consider 
$\overline{C_h}\otimes \mathcal{M}(\C)$ equipped with the structure of a $\rho$-ring via $\rho(c\otimes f)=c\otimes \rho(f)$.

The following result  will be used in the sequel.
\begin{lem}\label{lem5}
 The ring $\mathcal{R}=\overline{C_{h}}(x)[V,1/\det(V)]\subset \overline{C_h}\otimes \mathcal{M}(\C)$ is a Picard-Vessiot ring extension for $\rho(Y)=BY$ over $\overline{C_{ h}}(x)$.
\end{lem}

\begin{proof}
We  apply Lemma \ref{lem7} with 
$F=C_{h}(x,V)\subset \mathcal{M}(\C)$, $\widetilde{\mathbf{C}}=\overline{C_h}$, and $\mathbf{C}=C_h$. We obtain a simple difference ring 
$\overline{C_h}\otimes_{C_h} C_{h}(x,V)$. The nilpotent elements form a difference ideal, and by simplicity, it is $(0)$ proving that $\overline{C_h}\otimes_{C_h} C_{h}(x,V)$ has no nilpotent elements. Then, the total quotient  ring  has no nilpotent elements too. By Lemma \ref{lem7}, the latter has field of constants that is  also $\overline{C_h}$. As a total quotient  ring, every nonzero divisor is invertible.
 We conclude with \cite[Propositions~6.17]{HS08}, applied with the trivial derivation, see also \cite[Corollary~1.24]{vdPS97}, that $\mathcal{R}=\overline{C_{h}}(x)[V,1/\det(V)]$ is a Picard-Vessiot ring extension for $\rho(Y)=BY$ over $\overline{C_{ h}}(x)$. 
\end{proof}

The change of variables $Z:=TY$,  $T\in \GL_{n}(\C(x))$, transforms the system  $\rho (Y)=AY$ with  $A\in  \GL_{n}(\C(x))$ into $\rho (Z)=BZ$ where $B:=\rho(T)AT^{-1}\in  \GL_{n}(\C(x))$.  Let  $A,B\in  \GL_{n}(\C(x))$.  We will say that $[A]$ and $[B]$ are equivalent over $\C(x)$ if there exists   $T\in \GL_{n}(\C(x))$ such that $B=\rho(T)AT^{-1}$.  The following theorem has been proved in \cite[Propositions~1.20 and~1.21]{vdPS97}.  

\begin{theo}
Let $G:=\mathrm{Gal}(\mathcal{Q}|\C(x))$ that we identify as an algebraic subgroup of $\mathrm{GL}_n (\C)$.  There exists $T\in \mathrm{GL}_n(\C(x))$ such that $\rho(T)AT^{-1}\in G(\C(x))$. 
\end{theo}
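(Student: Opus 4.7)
The plan is to exploit the torsor structure of the Picard-Vessiot ring $\mathcal{R}$. Writing $\mathcal{R} = \C(x)[X,1/\det(X)]/I$ for a generic matrix of indeterminates $X=(X_{ij})$, with $\rho$ extended by $\rho(X)=AX$ and $I$ a maximal $\rho$-invariant ideal, the image $U$ of $X$ in $\mathcal{R}$ is a fundamental matrix. A standard result of difference Galois theory identifies $Z:=\mathrm{Spec}(\mathcal{R})$ with a right $G$-torsor over $\mathrm{Spec}(\C(x))$, where $G\subset \mathrm{GL}_{n,\C}$ acts on $Z\subset \mathrm{GL}_{n,\C(x)}$ by right multiplication; concretely, $\mathcal{R}\otimes_{\C(x)}\mathcal{R}\simeq \mathcal{R}\otimes_{\C}\C[G]$ compatibly with this action. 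This is the geometric reformulation that I would want to leverage.

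Assume for the moment that $Z$ admits a $\C(x)$-rational point $T_0:=\phi(U)\in \mathrm{GL}_n(\C(x))$, i.e., a $\C(x)$-algebra morphism $\phi:\mathcal{R}\to \C(x)$. Since $\rho$ is an automorphism of both $\mathcal{R}$ and $\C(x)$, the composite $\psi:=\rho\circ\phi\circ\rho^{-1}$ is again a $\C(x)$-algebra map $\mathcal{R}\to \C(x)$. From $\rho(U)=AU$ one gets $\rho^{-1}(U)=\rho^{-1}(A)^{-1}U$, whence $\psi(U)=A^{-1}\rho(T_0)$. Thus $T_0':=A^{-1}\rho(T_0)$ is a second $\C(x)$-point of $Z$, and the $G$-torsor property forces $T_0^{-1}T_0'\in G(\C(x))$, that is, $T_0^{-1}A^{-1}\rho(T_0)\in G(\C(x))$. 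Taking inverses and setting $T:=T_0^{-1}$ yields $\rho(T)AT^{-1}\in G(\C(x))$, which is exactly what we want.

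The genuine obstacle is therefore producing a $\C(x)$-rational point of the torsor $Z$. This is the substantive input of the proof: although $\C(x)$ is a $C_1$-field by Tsen's theorem (so that torsors under the connected component $G^\circ$ are automatically trivial) and $\C$ is algebraically closed, the finite component group $G/G^\circ$ must still be handled by a descent argument, typically by adjusting the choice of the maximal $\rho$-ideal $I$ within its $G/G^\circ$-orbit so as to trivialise the finite cover. This is the heart of the matter and corresponds to the contents of \cite[Propositions~1.20 and~1.21]{vdPS97}, where the construction is carried out within the difference-algebraic framework rather than via general torsor theory.
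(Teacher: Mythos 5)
Your reduction is correct and it is in fact the standard route: the paper itself gives no argument for this statement, it simply invokes \cite[Propositions~1.20 and~1.21]{vdPS97}, whose content is exactly what you describe (Proposition~1.20 is the torsor structure of $\mathrm{Spec}(\mathcal{R})$, Proposition~1.21 is the triviality of that torsor over the base in question together with the consequence $\rho(T)AT^{-1}\in G(\C(x))$). The passage from a $\C(x)$-point $T_0$ of the torsor to $T:=T_0^{-1}$ via the second point $A^{-1}\rho(T_0)$ is exactly right.

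The one place where your write-up is misleading is the parenthetical explanation of how the finite component group is dealt with. First, the Picard--Vessiot ring is unique up to isomorphism, so ``adjusting the choice of the maximal $\rho$-ideal $I$ within its $G/G^\circ$-orbit'' does not change the isomorphism class of the torsor and cannot by itself trivialise the finite cover. Second, and more importantly, the triviality of the $\pi_0$-part is not a formal consequence of Tsen's theorem plus generalities on torsors: it genuinely uses that $\rho$ is the shift $x\mapsto x+h$. Indeed, for the same field $\C(x)$ equipped with $\rho: y(x)\mapsto y(qx)$ ($q$ generic, $\sqrt{q}\notin\pm q^{\Z}$), the rank-one system $\rho(y)=\sqrt{q}\,y$ has difference Galois group $\mu_2$ (since $(\sqrt q)^2=q=\rho(x)/x$ while $\sqrt q\neq\pm\rho(f)/f$, as one sees by evaluating $\rho(f)/f$ at $0$), its Picard--Vessiot ring is the field $\C(x^{1/2})$, a nontrivial $\mu_2$-torsor, and no $T\in\GL_1(\C(x))$ with $\rho(T)\sqrt q\,T^{-1}\in\{\pm1\}$ exists. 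So the heart of \cite[Proposition~1.21]{vdPS97} is a shift-specific fact (essentially that the relative algebraic closure of $\C(x)$ in a component of the Picard--Vessiot ring is a finite difference field extension with the same constants, and for the shift such extensions are trivial; equivalently, the arithmetic of $c\,\rho(g)/g$ for the shift). Deferring this step to the citation, as the paper does, is legitimate; but your sketched mechanism should not be read as a proof of it, since any argument that does not use the specific form of $\rho$ would also ``prove'' the false $q$-difference analogue.
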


\section{Differential setting}\label{secgaldif}

We refer to \cite{HS08} for more details on what follows.  A differential ring $(R,\delta)$ is a ring equipped with a derivation, that is an additive morphism satisfying the Leibnitz rule $\delta (fg)=f\delta(g)+\delta (f)g$.   We define similarly  the notion of differential fields,  differential algebras,  etc...  The ring of $\delta$-constants  of $R$ is defined by 
$$
R^\delta =\{ r \in R \mid  \delta(r)= 0\}\,.
$$ 
If $R$ is a field,  $R^{\delta}$ is also a field and will be called the field of constants.   Let $(R,\delta)$ be a differential ring extension for the differential field $(\mathbf{k},\delta)$. We say that $f\in R$ is differentially algebraic over $\mathbf{k}$ if there exist $m\in \N$,  and $0\neq P\in \mathbf{k}[X_0,\dots, X_m]$ such that $P(f, \delta f ,\dots, \delta^m f)=0$. We say that $f$ is differentially transcendental over $\mathbf{k}$ otherwise. \par 
A $(\rho,\delta)$-ring $(R,\rho, \delta)$ is a ring equipped with an automorphism $\rho$  and a derivation $\delta$ that commutes with $\rho$.  We define similarly the notion of $(\rho,\delta)$-fields,   etc...
\begin{exa}
If we consider the notation of the introduction,   $(\C(x),\rho, \partial_x)$ and $(\mathcal{M}(\C),\rho, \partial_x)$ are $(\rho,\partial_x)$-fields, and we have $\C(x)^{\partial_x}=\mathcal{M}(\C)^{\partial_x}=\C$. 
\end{exa}
We say that $I$ is a $(\rho,\delta)$-ideal of $R$ if $I$ is an ideal  such that $\rho(I)\subset I$ and $\delta(I)\subset I$.   We say that $(R,\rho,\delta)$ is $(\rho,\delta)$-simple if its only $(\rho,\delta)$-ideals are $\{0\}$ and $R$.

Let us denote by $R\{X_1,\dots, X_n\}_{\delta}$ the ring  of differential polynomials in the indeterminates $\delta^{i}X_j$.
  We recall that a  differential field $(L,\delta)$ is called  differentially closed
or $\delta$-closed if, for every set of $\delta$-polynomials $\mathcal F$,  
the system of $\delta$-equations $\mathcal F=0$ has a solution in some $\delta$-field extension of $L$
 if and only if it has a solution in $L$.  There always exists a differential field extension that is differentially closed.  \par 
 Assume that $\mathbf{k}$ is a $(\rho,\delta)$-field with $\mathbf{C}=\mathbf{k}^\rho$ differentially closed and let $A\in \mathrm{GL}_n(\mathbf{k})$.  
A $(\rho,\delta)$-Picard-Vessiot ring extension for $\rho(Y)=AY$ over $\mathbf{k}$ is a $(\rho,\delta)$-ring extension $\mathcal{S}|\mathbf{k}$ such that 
\begin{itemize}
\item There exists $U\in \mathrm{GL}_{n}(\mathcal{S})$;
\item $\mathcal{S}=\mathbf{k}\{U,1/\det(U)\}_{\delta}$;
\item  $\mathcal{S}$ is a simple $(\rho,\delta)$-ring.  
\end{itemize}

A $(\rho,\delta)$-Picard-Vessiot ring extension exists and is unique up to isomorphisms of $\mathbf{k}$-$(\rho,\delta)$-algebras.  
Given a $(\rho,\delta)$-Picard-Vessiot ring extension $\mathcal{S}|\mathbf{k}$,  the $(\rho,\delta)$-Picard-Vessiot extension $\mathcal{Q}_S$ is the total ring of fractions of $\mathcal{S}$.  We have $\mathcal{Q}_S^{\rho}=\mathcal{S}^{\rho}=\mathbf{k}^{\rho}=\mathbf{C}$.
We define the $(\rho,\delta)$-Galois group as  the group of ring automorphisms of $\mathcal{Q}_S$, leaving $\mathbf{k}$ invariant and commuting with $\rho$ and $\delta$, that is 
$$\mathrm{Gal}^{\delta}(\mathcal{Q}_S|\mathbf{k})=\{ \sigma \in \mathrm{Aut}(\mathcal{Q}_S|\mathbf{k})| \sigma \circ \rho=\rho\circ \sigma,\quad  \sigma\circ \delta=\delta\circ \sigma\}.$$

For any fundamental matrix $U \in \GL_n(\mathcal{Q}_S)$, an easy computation shows that 
$U^{-1}\sigma(U) \in \GL_{n}(\mathbf{C})$ for all $\sigma \in \Gal^{\delta}(\mathcal{Q}_S|\mathbf{k})$. 
By  \cite[Theorem~2.6]{HS08}, the faithful representation 
\begin{eqnarray*}
\Gal^{\delta}(\mathcal{Q}_S |\mathbf{k}) & \rightarrow & \GL_{n}(\mathbf{C}) \\ 
  \sigma & \mapsto & U^{-1}\sigma(U)
\end{eqnarray*}
identifies $\Gal^{\delta}(\mathcal{Q}_S|\mathbf{k}) $ with a linear differential algebraic subgroup ${H\subset\GL_{n}(\mathbf{C})}$,  that is a group of matrices whose entries satisfy a set of  algebraic differential relations. 
Choosing another fundamental matrix of solutions $U$ leads to a conjugate representation.

\subsection*{Specific results for shift equations}
The field $C_h$ may be equipped with a structure of differential field with the derivation $\delta:=\partial_x$.\par 
 Let $\widetilde{C_h}$ be a differentially closed field containing $C_{h}$.   Consider 
 $\widetilde{C_h}(x)$,  that is equipped with a structure of $(\rho,\delta)$-field with $\left(\widetilde{C_h}(x)\right)^{\rho}=\widetilde{C_h}$,  $\rho(x)=x+h$ and $\delta (x)=1$.   Note that $\rho$ and $\delta$ commute.  
Consider 
$\widetilde{C_h}\otimes \mathcal{M}(\C)$ equipped with the structure of a $(\rho,\delta)$-ring via $\rho(c\otimes f)=c\otimes \rho(f)$ and $\delta(c\otimes f)=\delta(c)\otimes f+ c\otimes \delta (f)$. Note that  $(\widetilde{C_h}\otimes \mathcal{M}(\C))^{\rho}=\widetilde{C_h}$.  
 
    The following result that is the analogue of Lemmas \ref{lem7} and \ref{lem5}, and its proof is totally similar.

\begin{lem}\label{lem6}
Let $B\in \mathrm{GL}_{n}(\C(x))$ and  let $V\in \mathrm{GL}_{n}(\mathcal{M}(\C))$ such that $\rho(V)=BV$.  The ring $\mathcal{S}=\widetilde{C_h}(x) \{ V,1/\det(V) \}_{\delta}$ is a $(\rho,\delta)$-Picard-Vessiot ring extension for $\rho(Y)=BY$ over $\widetilde{C_{ h}}(x)$. 
\end{lem}

  Let $B\in \mathrm{GL}_{n}(\C(x))$.  Given $\ell \in \N^*$,  we may iterate the difference system $\rho(Y)=BY$ by considering $\rho^{\ell}(Y)=B_{[\ell]}Y$,  where $B_{[\ell]}= \rho^{\ell -1}(B)\times \dots \times B$. 
Let $G_{[\ell]}$ be the difference Galois group of the system $\rho^{\ell}(Y)=B_{[\ell]}Y$ and $G$ be the difference Galois group of $\rho(Y)=BY$.\par 
The following lemma is a slight adaptation of  \cite[Proposition~4.6]{DHR2} in the particular case where the parametric operator is the identity and $\kappa=1$,  see also  \cite[Proposition~4.10]{adamczewski2021hypertranscendence}.

\begin{lem}\label{lem2}
Let $B\in \mathrm{GL}_{n}(\C(x))$. Let $1\leq \kappa \leq n$, and  let $\mathcal{Y}_1,\dots,\mathcal{Y}_{\kappa} \in \mathcal{M}(\C)^n$ vectors solution of $\rho(Y)=BY$ that are $\mathcal{M}(\C)$-linearly independent.
 Then, there exists $\ell\geq 1$, such that 
\begin{itemize}
\item there exists a $(\rho^{\ell},\delta)$-Picard-Vessiot extension for $\rho^{\ell}(Y)=B_{[\ell]}Y$ over $\widetilde{C_h}(x)$, that is a field,  with fundamental matrix that has  coefficients in  $\mathcal{M}(\C)$, and admits $\mathcal{Y}_1,\dots,\mathcal{Y}_{\kappa}$ as first $\kappa$ vector solutions. 
\item $G_{[\ell]}$ is the connected component of the identity of $G$. 
\end{itemize}
\end{lem}

\begin{proof}
As we can see in the proof of \cite[Proposition~4.6]{DHR2}, there exists  $\ell\geq 1$, such that the $(\rho^{\ell},\delta)$-Picard-Vessiot extension for $\rho^{\ell}(Y)=B_{[\ell]}Y$ over $\widetilde{C_h}(x)$, is a field and  $G_{[\ell]}$ is the connected component of the identity of $G$. By \cite[Theorem 1]{Pra}, let $V\in \mathrm{GL}_n (\mathcal{M}(\C))$ solution of  $\rho^{\ell}(Y)=B_{[\ell]}Y$. Since $V$ is invertible, the $n$ columns of $V$ form a $\mathcal{M}(\C)
$-vector space of dimension $n$.  Due to the fact that $\mathcal{Y}_1,\dots,\mathcal{Y}_{\kappa}$ are $\mathcal{M}(\C)$-linearly independent, we may use the incomplete basis theorem to find $\mathcal{M}(\C)$-linearly independent vector  $\mathcal{Y}_1,\dots,\mathcal{Y}_{\kappa},Y_{\kappa+1}, \dots,Y_{n} \in \mathcal{M}(\C)^n$, which are solutions of $\rho^{\ell}(Y)=B_{[\ell]}Y$, and consider $V'\in \mathrm{GL}_n (\mathcal{M}(\C))$ be the corresponding matrix. It is a fundamental matrix of  $\rho^{\ell}(Y)=B_{[\ell]}Y$ with $\mathcal{Y}_1,\dots,\mathcal{Y}_{\kappa}$ as first columns.  Then, without loss of generality,  we may replace $V$ by this fundamental matrix and reduce to the case where $\mathcal{Y}_1,\dots,\mathcal{Y}_{\kappa}$ are the first columns of $V$.
By Lemma~\ref{lem6} the ring $\widetilde{C_h}(x)\{V,1/\det(V)\}_{\delta}$ is a $(\rho^{\ell},\delta)$-Picard-Vessiot ring extension for $\rho^{\ell}(Y)=B_{[\ell]}Y$ over $\widetilde{C_h}(x)$. Since the $(\rho^{\ell},\delta)$-Picard-Vessiot extension is a field,  $\widetilde{C_h}(x)\{V,1/\det(V)\}_{\delta}$ is an integral domain and its field of fraction is isomorphic to the $(\rho^{\ell},\delta)$-Picard-Vessiot extension.  %Since the Picard-Vessiot extension is a field, 
% $\mathbf{C}(x)\{V,1/\det(V)\}_{\delta}$ is an integral domain, and the Picard-Vessiot extension for $\rho^{\ell}(Y)=B_{[\ell]}Y$ over $\mathbf{C}(x)$ is its fraction field.}
\end{proof}

%For $\mathbf{k}$ a $(\rho,\delta)$-field,  let  $\mathbf{k}\langle X_1,\dots, X_n\rangle_{\delta}$ be the field  of differential rational functions in the indeterminates $\delta^{i}X_j$.

\section{Irreducible Galois group}\label{secirred}
Recall that we consider the system   $\rho(Y)=AY$ with $A\in \mathrm{GL}_n (\C(x))$.  
By  \cite[Theorem 1]{Pra},  there exists $U\in \mathrm{GL}_{n}(\mathcal{M}(\C))$ such that $\rho(U)=AU$.  Let $\mathcal{Q}_{\C}$ be a Picard-Vessiot extension for $\rho(Y)=AY$ over $\C(x)$. 
In this section, following \cite[Section~5.2]{adamczewski2021hypertranscendence} we consider the case where $\Gal(\mathcal{Q}_{\C} |\C(x))$,  the difference Galois group of $\rho(Y)=AY$ over $\C(x)$,  seen as an algebraic subgroup of $\mathrm{GL}_n(\C)$,  is irreducible and connected.   Recall that an algebraic subgroup $G\subset  \mathrm{GL}_n(\C)$  is said to be irreducible;  if and only if for all  $\C$-vector spaces $V\subset \C^n$,  $G(V)\subset V$ implies that $V$ is either $\{0\}$ or $\C^n$.   The proof is very similar to \cite[Section~5.2]{adamczewski2021hypertranscendence},  and only the points where this proof is really different to the one of  \cite[Section~5.2]{adamczewski2021hypertranscendence} will be detailed.
\begin{propo}\label{prop: colonne}
Let us assume that $n\geq 2$.  If  $\Gal(\mathcal{Q}_{\C} |\C(x))$ is irreducible and connected,  then every column of $U$
contains at least one element that is 
differentially transcendental over $\widetilde{C_h}(x)$.  
\end{propo}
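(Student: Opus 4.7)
The plan is to argue by contradiction. Suppose the $j$-th column $(u_1,\ldots,u_n)^{\top}$ of $U$ has every $u_i$ differentially algebraic over $\widetilde{C_h}(x)$; I will derive a contradiction with the irreducibility of $\mathrm{Gal}(\mathcal{Q}|\C(x))$.

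The first step is to spread the $\delta$-algebraicity to every entry of $U$. Set
$$W:=\bigl\{v\in\widetilde{C_h}^n : \text{every entry of }Uv\text{ is }\delta\text{-algebraic over }\widetilde{C_h}(x)\bigr\};$$
this is a $\widetilde{C_h}$-linear subspace because sums and scalar multiples preserve $\delta$-algebraicity, and $W\neq 0$ because $e_j\in W$. By Lemma~\ref{lem6}, $\mathcal{Q}_S=\widetilde{C_h}(x)\langle U\rangle_{\delta}$ is the $(\rho,\delta)$-Picard--Vessiot extension of $\rho(Y)=AY$ over $\widetilde{C_h}(x)$; let $G^{\delta}=\mathrm{Gal}^{\delta}(\mathcal{Q}_S|\widetilde{C_h}(x))$ and $\bar G\subset\mathrm{GL}_n(\widetilde{C_h})$ be its Zariski closure. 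For $\sigma\in G^{\delta}$ and $v\in W$, the identity $\sigma(Uv)=U[\sigma]v$ together with the facts that $\sigma$ fixes $\widetilde{C_h}(x)$ and commutes with $\delta$ yields $[\sigma]v\in W$; so $W$ is $G^{\delta}$-stable and, being Zariski-closed, also $\bar G$-stable. By descent of Picard--Vessiot rings along enlargement of the algebraically closed field of constants, $\bar G$ is the base change to $\widetilde{C_h}$ of $\mathrm{Gal}(\mathcal{Q}|\C(x))$, hence remains irreducible and connected. Irreducibility then forces $W=\widetilde{C_h}^n$, which is to say every entry of $U$ is $\delta$-algebraic over $\widetilde{C_h}(x)$. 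By the parametric Galois correspondence of \cite{HS08}, this is equivalent to $\delta\text{-}\dim(G^{\delta})=\delta\text{-}\mathrm{trdeg}(\mathcal{Q}_S|\widetilde{C_h}(x))=0$.

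The contradiction is then extracted along the lines of \cite[Section~5.2]{adamczewski2021hypertranscendence}. The group $G^{\delta}$ is now a Zariski-dense $\delta$-subgroup of $\bar G$ of $\delta$-dimension zero, and $\bar G$ is connected and reductive (because its representation on $\widetilde{C_h}^n$ is faithful and, by the previous step, irreducible). Cassidy's classification of such subgroups furnishes $\gamma\in\bar G(\widetilde{C_h})$ with $\gamma^{-1}G^{\delta}\gamma=\bar G(\widetilde{C_h}^{\delta})$. A short computation then shows that $B:=\delta(U\gamma^{-1})\cdot(U\gamma^{-1})^{-1}$ is invariant under $G^{\delta}$, so $B\in M_n(\widetilde{C_h}(x))$ and $U\gamma^{-1}$ solves the compatible pair $\rho(Y)=AY$, $\delta(Y)=BY$ over $\widetilde{C_h}(x)$. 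Applying the results of \cite{bezivin1993solutions,SS16} (after the necessary descent to a meromorphic setting) places the entries of $U\gamma^{-1}$ in $C_{\ell h}(x)[e^{\lambda_1 x},\ldots,e^{\lambda_k x}]$ for suitable $\ell$ and $\lambda_i$. Decomposing $U\gamma^{-1}$ along the distinct exponents $\lambda_i$ yields a $\bar G$-equivariant direct sum decomposition of $\widetilde{C_h}^n$: if more than one exponent appears this already contradicts irreducibility, and if only a single exponent $\lambda$ appears then $A_{[\ell]}=e^{\ell\lambda h}I$ for an appropriate $\ell$, whence Lemma~\ref{lem2} together with the connectedness of $\bar G$ confines $\bar G$ to the scalar subgroup of $\mathrm{GL}_n(\widetilde{C_h})$, again contradicting irreducibility for $n\geq 2$.

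The first two steps are essentially formal Galois-theoretic bookkeeping using Lemma~\ref{lem6} and Picard--Vessiot descent. The main obstacle is the third step: translating the abstract conclusion $\delta\text{-}\dim(G^{\delta})=0$ into the concrete periodic-times-exponential form of $U$ requires Cassidy's theorem in the form adapted to reductive groups, followed by a descent argument making \cite{bezivin1993solutions,SS16} applicable to $U\gamma^{-1}$, and finally the interplay with Lemma~\ref{lem2} to dispose of the single-exponent case. This is precisely the kind of structural analysis that \cite[Section~5.2]{adamczewski2021hypertranscendence} develops in the analogous $q$-difference and Mahler settings, which we adapt here to the shift.
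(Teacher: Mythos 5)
Your first step (spreading differential algebraicity from one column to all of $U$ via the stable subspace $W$ and irreducibility of the Galois group) matches the paper's argument, and the reformulation as vanishing differential transcendence degree is fine. The genuine gap is in the third step: you invoke ``Cassidy's classification'' for a Zariski-dense $\delta$-subgroup of finite $\delta$-dimension inside a group $\bar G$ that you have only shown to be \emph{reductive}. Cassidy's theorem (and the statement actually needed here, recorded in the paper as Proposition~\ref{propo:AS}, after \cite[Lemma~5.1]{AS17}) requires the identity component to be \emph{semisimple}. For a reductive group with positive-dimensional center the dichotomy fails: already for $\mathbb{G}_m$ there are Zariski-dense, finite-dimensional differential algebraic subgroups (cut out by conditions on logarithmic derivatives) that are not conjugate to the constant points, so no $\gamma$ with $\gamma^{-1}G^{\delta}\gamma=\bar G(\widetilde{C_h}^{\delta})$ need exist, and the integrability matrix $B=\delta(U\gamma^{-1})(U\gamma^{-1})^{-1}$ cannot be produced. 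This central-torus obstruction is exactly what the paper's proof is built to remove: it first uses \cite[Corollary~3.4]{HS08} on $\det(U)$ to write $\det(A)=c\rho(g)/g$, twists $U$ by $\exp(dx)$ so that the new system has determinant $\rho(g)/g$, passes to the connected component (which then lies in $\mathrm{SL}_n$), and only after establishing primitivity (\cite[Lemma~4.2]{adamczewski2021hypertranscendence}) and semisimplicity (\cite[Proposition~2.3]{SU93}) does it apply Proposition~\ref{propo:AS}. Your proof skips this entire reduction, so the key step is unjustified.

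Two further points would need repair even granting semisimplicity. First, your route to the contradiction goes through integrability and \cite{bezivin1993solutions,SS16}, but the auxiliary differential system $\delta(Y)=BY$ lives over $\widetilde{C_h}(x)$ with $\widetilde{C_h}$ a huge differentially closed field, and $U\gamma^{-1}$ is not a matrix of meromorphic functions; the ``necessary descent to a meromorphic setting'' is precisely the hard point and is not supplied, nor is the claim that the decomposition by exponents is $\bar G$-equivariant, or that a single exponent forces $A_{[\ell]}$ to be scalar. The paper avoids all of this: once $H=\widetilde{G}$, the finiteness of the differential transcendence degree (all entries of $V$ being differentially algebraic) forces $\widetilde{G}$, hence $G$, to be finite, and connectedness then gives $G^0=\{\mathrm{I}_n\}$, contradicting irreducibility for $n\geq 2$. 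You would do better to follow that shorter endgame once the semisimplicity reduction is in place.
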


As a key argument, we will use the following result due to \cite[Lemma~5.1]{AS17}.  It says that the $(\rho,\delta)$-Galois group must be as big as possible when 
the difference Galois group has an identity component that is semisimple.

\begin{propo}\label{propo:AS}
Let $B\in \mathrm{GL}_n (\C(x))$.  
We let $\widetilde{G}\subset \mathrm{GL}_{n}(\widetilde{C_h})$ 
denote the difference Galois group of $\rho(Y)=BY$ over $\widetilde{C_h}(x)$, and $H\subset \mathrm{GL}_{n}(\widetilde{C_h})$ denote the $(\rho,\delta)$-Galois group of $\rho(Y)=BY$ over $\widetilde{C_h}(x)$. 
 If the identity component of $\widetilde{G}$ 
 is semisimple, then $H=\widetilde{G}$.
\end{propo}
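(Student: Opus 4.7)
The plan is to combine two ingredients: a general Tannakian fact that the parametrized Galois group $H$ is Zariski-dense inside the ordinary difference Galois group $\widetilde{G}$, together with a theorem of Cassidy describing the Zariski-dense Kolchin-closed subgroups of a semisimple algebraic group over a differentially closed field.

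First, I would observe that $H$ sits naturally as a $\delta$-closed subgroup of $\widetilde{G}(\widetilde{C_h})$. Indeed, by the universal property of the (non-parametrized) Picard-Vessiot ring, the $(\rho,\delta)$-Picard-Vessiot ring $\mathcal{S}$ of Lemma~\ref{lem6} contains a copy of a difference Picard-Vessiot ring $\mathcal{R}$ for $\rho(Y)=BY$ over $\widetilde{C_h}(x)$, and every $(\rho,\delta)$-automorphism of $\mathcal{Q}_S|\widetilde{C_h}(x)$ restricts to a $\rho$-automorphism of the total fraction field of $\mathcal{R}$. A standard argument due to Hardouin--Singer (the parametrized analogue of \cite[Proposition~2.8]{HS08}) shows that the Zariski closure of $H$ inside $\GL_n(\widetilde{C_h})$ is precisely $\widetilde{G}$; equivalently, $H$ is Zariski-dense in $\widetilde{G}$.

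Next, pass to the identity component in the Zariski topology. Since $H$ is Zariski-dense in $\widetilde{G}$, a coset argument shows that $H\cap \widetilde{G}^0$ is again a $\delta$-closed, Zariski-dense subgroup of $\widetilde{G}^0$, and that $H$ meets every connected component of $\widetilde{G}$. Because $\widetilde{G}^0$ is semisimple by hypothesis and defined over the differentially closed field $\widetilde{C_h}$, Cassidy's structure theorem for Zariski-dense $\delta$-subgroups of semisimple linear algebraic groups applies: after conjugating by a suitable element of $\widetilde{G}^0(\widetilde{C_h})$, the subgroup $H\cap \widetilde{G}^0$ coincides with the group of $F$-points $\widetilde{G}^0(F)$ for some $\delta$-closed subfield $F\subseteq \widetilde{C_h}$.

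The crucial step — and the one I expect to be the main obstacle — is to show that necessarily $F=\widetilde{C_h}$. I would proceed by contradiction: if $F$ were a proper subfield, there would exist a nonzero derivation $\partial$ on $\widetilde{C_h}$ over $F$. Using the Lie-algebraic description of $\widetilde{G}^0(F)$ as the $\partial$-vanishing locus for a suitable logarithmic derivative, one produces a nontrivial element of the $(\rho,\delta)$-invariants of $\mathcal{Q}_S$ over $\widetilde{C_h}(x)$ via the Galois correspondence of \cite{HS08}. This contradicts the equality $\mathcal{Q}_S^\rho = \widetilde{C_h}$, because the parametrized Galois correspondence identifies fixed fields of $H$ with $(\rho,\delta)$-subfields and the only one lying above $\widetilde{C_h}$ with the correct transcendence behaviour is $\widetilde{C_h}(x)$ itself. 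Hence $F = \widetilde{C_h}$ and $H\cap \widetilde{G}^0 = \widetilde{G}^0$. Combined with the fact that $H$ surjects onto the finite component group $\widetilde{G}/\widetilde{G}^0$, this yields $H = \widetilde{G}$, as desired.
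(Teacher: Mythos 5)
The paper does not prove this proposition at all: it is quoted verbatim as \cite[Lemma~5.1]{AS17}, so your attempt has to be judged as a self-contained argument. Your first three steps are the standard opening moves and are essentially right: Zariski-density of $H$ in $\widetilde{G}$ is \cite[Proposition~2.8]{HS08} (not merely an ``analogue'' of it), the reduction to the identity component is a routine coset argument, and Cassidy's theorem is indeed the correct tool for Zariski-dense proper $\delta$-subgroups of a semisimple group. (Minor imprecision there: for a single derivation Cassidy gives that $H\cap\widetilde{G}^0$ is conjugate to the $\widetilde{C_h}^{\delta}$-points of a form of $\widetilde{G}^0$ defined over the field of $\delta$-constants $\widetilde{C_h}^{\delta}$; ``some $\delta$-closed subfield $F$'' is not the right class of fields.)

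The genuine gap is your final step, and the reasoning offered there is not just incomplete but incorrect. Having $H$ conjugate to $\widetilde{G}^0(\widetilde{C_h}^{\delta})$ produces no conflict with the Galois correspondence or with $\mathcal{Q}_S^{\rho}=\widetilde{C_h}$: the fixed field of the full $(\rho,\delta)$-Galois group is $\widetilde{C_h}(x)$ by definition of the correspondence, whatever $H$ is, and parametrized Galois groups of exactly the shape $G(\widetilde{C_h}^{\delta})$ do occur with no contradiction whatsoever --- e.g.\ for $\rho(y)=cy$ with solution $e^{\lambda x}$, where $\delta(e^{\lambda x})/e^{\lambda x}=\lambda$ forces the $(\rho,\delta)$-group to be $\mathbb{G}_m(\widetilde{C_h}^{\delta})\subsetneq\mathbb{G}_m(\widetilde{C_h})$. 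What the inclusion $H\subset\GL_n(\widetilde{C_h}^{\delta})$ (up to conjugation) actually yields, via the second fundamental theorem \cite[Proposition~2.9]{HS08}, is that the system is \emph{integrable}: it admits a compatible linear $\delta$-system over $\widetilde{C_h}(x)$. The real content of \cite[Lemma~5.1]{AS17} is to rule this out when $\widetilde{G}^0$ is semisimple and nontrivial, and that requires a substantive analysis of integrable shift systems over $\C(x)$ (reduction to rank one, discrete residues / the structure theorems for equations satisfying both a linear difference and a linear differential equation), which shows their Galois groups are abelian-by-finite. Semisimplicity enters there, not through any formal Galois-theoretic manipulation. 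As written, your proof replaces the entire content of the cited result with an appeal to the Galois correspondence that cannot deliver the contradiction.
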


\begin{proof}[Proof of Proposition \ref{prop: colonne}]  
Let us argue by contradiction assuming that there exists  one column of the fundamental matrix $U$ whose coordinates are all 
differentially algebraic over $\widetilde{C_h}(x)$.  By Lemma \ref{lem5},  $\overline{C_h}(x)[U,1/\det(U)] $ is a Picard-Vessiot ring extension for $\rho(Y)=AY$ over $\overline{C_{ h}}(x)$ and by Lemma~\ref{lem6}, $\widetilde{C_h}(x) \{ V,1/\det(V) \}_{\delta}$ is a $(\rho,\delta)$-Picard-Vessiot ring extension for $\rho(Y)=AY$ over $\widetilde{C_{ h}}(x)$.  The difference Galois group of $\rho(Y)=AY$ over $\widetilde{C_h}(x)$ is $\Gal(\mathcal{Q}_{\C} |\C(x))(\widetilde{C_h})$ and is therefore irreducible. 
Hence,   the same reasoning as the proof of \cite[Proposition~5.4]{adamczewski2021hypertranscendence} shows that 
 all entries of $U$ are 
differentially algebraic  over $\widetilde{C_h}(x)$.   Then, $\det(U)$ is also differentially algebraic  over $\widetilde{C_h}(x)$.
We observe that the determinant $\det(U)$ is solution to the equation 
\begin{equation}\label{eq: detA}
\rho(y)=\det(A)y
\end{equation}
and  that 
the difference Galois group of this equation over $\C(x)$ 
 is the group $\det(\Gal(\mathcal{Q}_{\C} |\C(x)))$.   We claim that  $\det(U)$ is differentially algebraic over $C_{h}(x)$.   We find that $\frac{\partial_x \det(U)}{\det(U)}$ is solution of $\rho(y)-y=\frac{\partial_x \det(A)}{\det(A)}$.  Let $\widehat{b}(x):=\frac{\partial_x \det(A)}{\det(A)}\in \C(x)$. By Remark \ref{rem1},  we may use \cite[Proposition~3.1]{HS08}, to deduce that  there exist a nonzero linear differential operator $\widetilde{L}$ with coefficients in $\widetilde{C_h}$,  and $\widetilde{g}\in \widetilde{C_h}(x)$, such that $\widetilde{L}(\widehat{b})=\widetilde{g}(x+h)-\widetilde{g}(x)$.   Since $\widetilde{L}$ has coefficients in $\widetilde{C_h}$,  it follows that the poles of $\widetilde{L}(\widehat{b})$, seen as an element of $\widetilde{C_h}(x)$ are the poles of $\widehat{b}$, which are in $\C$, because   $\widehat{b}(x)\in \C(x)$. With $0\neq \widetilde{L}(\widehat{b})=\widetilde{g}(x+h)-\widetilde{g}(x)$, it follows that the poles of $\widetilde{g}(x)\in \widetilde{C_h}(x)$ are also in $\C$.   Taking the 
 partial fraction decomposition  yields that the equation $\widetilde{L}(\widehat{b})=\widetilde{g}(x+h)-\widetilde{g}(x)$ is equivalent to a polynomial equation with coefficients in $\C$.  Since $\C$ is algebraically closed it has a solution in $\C$, proving the existence of a nonzero linear differential operator $L$ with coefficients in $\C$,  and $g'\in \C(x)$, such that $L(\widehat{b})=g'(x+h)-g'(x)$. Then,   $L(\frac{\partial_x \det(U)}{\det(U)}( x))-g'(x)$ is $\rho$-invariant.  This shows that $\det(U)(x)$ is differentially algebraic over $C_{h}(x)$. By Remark \ref{rem1},  we may use 
 \cite[Corollary~3.4]{HS08}, to deduce that there exist some nonzero elements 
$c\in \C$ and $g\in \C(x)$ such that $\det(A)=c\rho(g)/g$.  Let $d\in \C$ such that $e^{dh}=c^{-1/n}$ and consider the matrix 
$V=Ue^{dx}\in \mathrm{GL}_{n}(\mathcal{M}(\C))$. It is solution of $\rho(Y)=BY$ with $B=c^{-1/n}A$.  We have $\det(B)=\rho(g)/g$.  By Lemma \ref{lem6}, 
$\widetilde{C_h}(x)\{V,1/\det(V)\}_{\delta}$  is a $(\rho,\delta)$-Picard-Vessiot ring extension for $\rho(Y)=BY$ over $\widetilde{C_h}(x)$. 
 Let $G$ be the difference Galois group of $\rho (Y)=BY$ over $\C(x)$,  $\widetilde{G}$ the difference Galois group of $\rho (Y)=BY$ over $\widetilde{C_h}(x)$, and $H$ the $(\rho,\delta)$-Galois group  over $\widetilde{C_h}(x)$.  
Same reasons as in the proof of   \cite[Proposition 5.4]{adamczewski2021hypertranscendence} show that $G$ is irreducible,   the connected component $G^0$ of the identity of $G$ is also irreducible,  and there exists $\ell\geq 1$ such that  
the difference Galois group of $\rho^\ell (Y)=B_{[\ell]}Y$ over $\C(x)$ is  $G^0$. 
Furthermore,  $\det (B_{[\ell]})= \rho^{\ell}(g)/g$ and therefore,  $G^0 \subset \mathrm{SL}_{n}(\C)$.  
With  \cite[Lemma 4.2]{adamczewski2021hypertranscendence} we find that $G^0 $ is primitive (we refer to the latter paper for the notion of primitive groups that will not be used again in the sequel).  By \cite[Proposition~2.3]{SU93},  we finally obtain that  $G^0 $ is semisimple. 
Then,  $\tG^0 =G^0 (\widetilde{C_h})$, the identity component of $\tG$,  is semisimple too. 

We infer from Proposition \ref{propo:AS} 
that $H=\tG$.  We recall that all entries of  $U$ are differentially algebraic  over $\widetilde{C_h}(x)$.  Then the same holds for ${V=Ue^{dx}}$.  The same reasoning as in the proof of   \cite[Proposition~5.4]{adamczewski2021hypertranscendence} shows that $G$ is a finite group.  
Since it is connected,  we deduce that $G^0=\{ \mathrm{I}_{n}\}$, 
where we let $\mathrm{I}_{n}$ denote the identity 
matrix of size $n$. Since by assumption $n\geq 2$, this provides a contradiction with the fact
that $G^0$ is irreducible. 
\end{proof}

\section{Proof of the main result}\label{secfin}
We are now ready to prove Theorem \ref{thm}.  Recall that 
  $\mathcal{Y}:= (f_1,\dots, f_n)^{\top}\in (\mathcal{M}(\C))^n$ is a solution of $\rho(Y)=AY$, with $A\in \mathrm{GL}_n (\C(x))$.
Let ${C^{\infty}_{h}=\displaystyle \cup_{\ell=1}^{\infty} C_{\ell h} }$ and let $R_{h,\exp}$ be the ring of   $C^{\infty}_h (x)$-linear combinations between $\{e^{\lambda_i x}, \lambda_i \in \C\}$.   We have to prove that when all the entries of $\mathcal{Y}$ are differentially algebraic over $\C(x)$,  they all belong to $R_{h,\exp}$.
If $\mathcal{Y}$ is zero the result is clear.  So let us assume that $\mathcal{Y}$ is nonzero. 
 Let $\mathcal{Q}$ be the Picard-Vessiot extension for $\rho(Y)=AY$ over $\overline{C_{h}}(x)$ and let $\Gal(\mathcal{Q} |\overline{C_{h}}(x))$ be the difference Galois group.  
By Lemma~\ref{lem2},  with $\delta=0$,  
there exists $\ell\geq 1$, such that 
\begin{itemize}
\item we have a Picard-Vessiot extension for $\rho^{\ell}(Y)=A_{[\ell]}Y$ over $(\overline{C_{h}}(x),\rho^{\ell})$ that is a field;
\item  the fundamental matrix $U$ of the Picard-Vessiot extension over $(\overline{C_h} (x),\rho^{\ell})$ has coefficients in $\mathcal{M}(\C)$ and  admits $\mathcal{Y}$ as first column;
\item the difference Galois group of $\rho^{\ell}(Y)=A_{[\ell]}Y$  over $\overline{C_{h}}(x)$ is 
the component of the identity of $\Gal(\mathcal{Q} |\overline{C_{h}}(x))$.
\end{itemize} 
  Note that  $(\C(x),\rho^{\ell},\delta)$ is  a $(\rho^{\ell},\delta)$-field and $R_{\ell h, \exp} \subset R_{h, \exp}$ is a  $(\rho^{\ell},\delta)$-ring. 
So,  without loss of generality, we may replace $A$ by  $A_{[\ell]}$,  $h$ by $\ell h$, and reduce to the case where the Picard-Vessiot extension $\mathcal{Q}$ is a field,  the difference Galois group over $\overline{C_{h}}(x)$ is connected,  and the Picard-Vessiot extension for  $\rho(Y)=AY$ over $\overline{C_{h}}(x)$  admits a fundamental matrix with coefficients in $\mathcal{M}(\C)$ and with $\mathcal{Y}$ as first column.   Let $\mathcal{Q}_{\C}$ be the Picard-Vessiot extension for $\rho(Y)=AY$ over $\C(x)$ and let $\Gal(\mathcal{Q}_{\C} |\C(x))$ be the difference Galois group.   Since  $\Gal(\mathcal{Q} |\overline{C_{h}}(x))= \Gal(\mathcal{Q}_{\C} |\C(x))(\overline{C_{h}})$,  we find that $\Gal(\mathcal{Q}_{\C} |\C(x))$ is connected too. \par 
Let us prove  Theorem \ref{thm} by induction on $n$, the order of the equation.  The global  strategy of the proof will be quite similar to \cite{adamczewski2021hypertranscendence}, but the arguments will differ inside each step of the proof. 
\begin{enumerate}
\item Case $n=1$ (initialization of the induction).
\item Reduction to the affine case when $n\geq 2$.
\item Case of affine equations.
\end{enumerate}
\begin{center}
Step 1: Case $n=1$.
\end{center}
Let us prove Theorem \ref{thm} in the case $n=1$.
\begin{propo}\label{prop2}
Let $0\neq a\in \C(x)$.  If $f\in \mathcal{M}(\C)$ is solution of $\rho(y)=ay$ and is differentially algebraic over $\C(x)$, then $f\in  R_{ h, \exp}$.
\end{propo}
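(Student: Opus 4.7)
The plan is to prove that differential algebraicity of the meromorphic solution $f$ forces the structural factorization $a = c\,\rho(g)/g$ for some $c\in\C^{*}$ and $g\in\C(x)^{*}$; once this is in hand, an explicit solution in $R_{h,\exp}$ is written down by inspection. The crucial input is the rank-one Hardouin--Singer criterion combined with an orbit-by-orbit principal-part analysis.

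First, apply the $(\rho,\delta)$-Galois framework. By Lemma~\ref{lem6}, $\widetilde{C_h}(x)\{f,1/f\}_{\delta}$ is a $(\rho,\delta)$-Picard--Vessiot ring for $\rho(y)=ay$ over $\widetilde{C_h}(x)$, with $(\rho,\delta)$-Galois group $H$ a Kolchin-closed subgroup of $\mathrm{GL}_{1}(\widetilde{C_h})=\mathbf{G}_m(\widetilde{C_h})$. Since $f$ is differentially algebraic over $\C(x)$, and hence over $\widetilde{C_h}(x)$, the group $H$ has $\delta$-dimension $0$ and is in particular a proper Kolchin-closed subgroup of $\mathbf{G}_m$. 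By \cite[Corollary~3.4]{HS08}, this is equivalent to the existence of a nonzero linear $\delta$-operator $L=\sum_{k=0}^{m}c_k\delta^{k}$ with $c_k\in\widetilde{C_h}$, $c_m\neq 0$, and an element $g_1\in\widetilde{C_h}(x)$ such that $L(\delta a/a)=\rho(g_1)-g_1$. Since $\delta a/a\in\C(x)$ and $(\rho-1)\widetilde{C_h}(x)=\widetilde{C_h}\otimes_{\C}(\rho-1)\C(x)$ by flat base change, a routine descent lets us take $c_k\in\C$ and $g_1\in\C(x)$.

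The main step is to deduce from this telescoping identity that for every $h$-orbit $O\subset\C$ one has $\sum_{\alpha\in O}\mathrm{ord}_{\alpha}(a)=0$. Write $\delta a/a=\sum_i m_i/(x-\alpha_i)$ with $m_i=\mathrm{ord}_{\alpha_i}(a)\in\Z$, and observe that the principal part of $L(1/(x-\alpha))$ at $\alpha$ is $P_L(1/(x-\alpha))$, where $P_L(t):=\sum_{k=0}^{m}c_k(-1)^{k}k!\,t^{k+1}$ is a polynomial of exact degree $m+1$ since $c_m\neq 0$. Fix an orbit $O=\{\beta_k:=\beta_0+kh:k\in\Z\}$, and let $\pi_k(t)$ be the principal part of $g_1$ at $\beta_k$, expressed as a polynomial in $t=1/(x-\beta_k)$ without constant term (nonzero only for finitely many~$k$). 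Comparing principal parts of both sides of $L(\delta a/a)=\rho(g_1)-g_1$ at $\beta_k$ gives the polynomial identity
\[
m(\beta_k)\,P_L(t)=\pi_{k+1}(t)-\pi_k(t)\qquad(k\in\Z),
\]
where $m(\beta_k):=\mathrm{ord}_{\beta_k}(a)$. Summing over $k\in\Z$ telescopes the right-hand side to zero, and since $P_L\neq 0$ this forces $\sum_{k\in\Z}m(\beta_k)=0$. As there are only finitely many orbits containing zeros or poles of $a$, we can now construct $g\in\C(x)^{*}$ orbit by orbit (setting $\mathrm{ord}_{\beta_{k+1}}(g)-\mathrm{ord}_{\beta_k}(g)=m(\beta_k)$, which is solvable precisely because the orbit sum vanishes) so that $a\cdot g/\rho(g)$ has no zeros or poles in $\C$ and is therefore equal to a nonzero constant $c\in\C^{*}$.

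To conclude, pick $\mu\in\C$ with $e^{\mu h}=c$, set $f_0:=g\cdot e^{\mu x}\in C_h(x)[e^{\mu x}]\subset R_{h,\exp}$, and check directly that $\rho(f_0)=a\,f_0$. The ratio $f/f_0\in\mathcal{M}(\C)$ is $\rho$-invariant, hence lies in $C_h$, and therefore $f=(f/f_0)\cdot g\cdot e^{\mu x}\in R_{h,\exp}$. The main technical obstacle lies in the middle paragraph: one must verify that $P_L$ genuinely has degree $m+1$ (so the orbit-telescoping collapses nontrivially) and that the flat descent from $\widetilde{C_h}$ to $\C$ is justified; both are essentially bookkeeping, but they are where any subtlety hides.
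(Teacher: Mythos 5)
Your proof is correct, but it unpacks a step that the paper handles by citation. The paper's own proof is two lines: it invokes \cite[Corollary~3.4]{HS08}, which for the shift operator already asserts that a nonzero differentially algebraic meromorphic solution of $\rho(y)=ay$ forces $a=c\,\rho(d)/d$ with $c\in\C^*$ and $d\in\C(x)^*$, and then concludes exactly as in your last paragraph (divide by $d\exp(c'x)$, observe the quotient is $\rho$-invariant, hence lies in $C_h$, so $f\in R_{h,\exp}$). You instead start from the general Galois-theoretic telescoping criterion (the $(\rho,\delta)$-group $H\subsetneq\mathbf{G}_m$ if and only if $L(\delta a/a)=\rho(g_1)-g_1$ for some nonzero linear $L$ with constant coefficients) and re-derive the multiplicative form of $a$ via the orbit-by-orbit principal-part computation; this is essentially the argument hidden inside the corollary of Hardouin--Singer that the paper quotes, so what your route buys is a self-contained, elementary residue argument at the cost of reproving a cited result. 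Two small repairs: the descent claim $(\rho-1)\widetilde{C_h}(x)=\widetilde{C_h}\otimes_{\C}(\rho-1)\C(x)$ is not literally true, since $\widetilde{C_h}(x)$ is strictly larger than $\widetilde{C_h}\otimes_{\C}\C(x)$ (it contains $1/(x-\tau)$ for $\tau\in\widetilde{C_h}\setminus\C$); but no descent is actually needed, because your orbit computation works verbatim with $c_k\in\widetilde{C_h}$ and $g_1\in\widetilde{C_h}(x)$: the poles of $L(\delta a/a)$ lie on $h$-orbits inside $\C$, poles of $g_1$ outside $\C$ cannot contribute to principal parts at such points, and the conclusion $\sum_k m(\beta_k)=0$ uses only $P_L\neq 0$ and $m(\beta_k)\in\Z$. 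Also treat $f=0$ separately (it is trivially in $R_{h,\exp}$), as the paper does, since you invert $f$ when forming the $(\rho,\delta)$-Picard--Vessiot ring.
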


\begin{proof}
If $f=0$ the result is clear.  Assume that $f\neq 0$.   By Remark \ref{rem1},  we may apply 
 \cite[Corollary~3.4]{HS08}, to deduce that   there exist nonzero elements 
$c\in \C$ and $d(x) \in \C(x)$ such that $f(x+h)/f(x)=a(x)=c d(x+h)/d(x)$.   Let $c'\in\C$ such that $e^{c'h}=c$. We find that $c_1 (x):=\frac{f(x)}{d(x)e^{c'x}}$ is $\rho$-invariant.  We have proved that $c_1 (x)\in C_h$ and therefore
 $f (x)= c_1 (x)d(x)e^{c'x}\in R_{ h, \exp}$ as expected.  
\end{proof}

\begin{center}
Step 2: Reduction to the affine case.
\end{center}

Let us now begin the induction step of the proof of Theorem \ref{thm}.
Let us fix $n\geq 2$ and  assume that  Theorem \ref{thm} holds for equations of order strictly less than $n$.  We assume that  each  entry of $\mathcal{Y}$ is differentially algebraic  over $\C(x)$ and prove  that $\mathcal{Y}$ has entries in $R_{h,\exp}$.  By Proposition \ref{prop: colonne},  $\Gal(\mathcal{Q}_{\C} |\C(x))$ is reducible.   By \cite[Lemma 4.4]{adamczewski2021hypertranscendence}, there exists $T\in \mathrm{GL}_n (\C(x))$ such that $Z:=T\mathcal{Y}$ is solution of a bloc system $\rho (Z)=BZ$ where  $B=\left(\begin{array}{cc}
B_{1,1} &B_{1,2}   \\ 
0 & B_{2,2} 
\end{array} \right)$,  and $B_{i,i}$,  $i=1,2$,  has  size $n_i<n $. Let us further assume that  $n_1$ is minimal for this property. The goal of this step is to prove that when $n_1>1$ the result holds.  More precisely, we want to prove the following:

\begin{propo}\label{prop1}
If  each  entry of $\mathcal{Y}$ is differentially algebraic  over $\C(x)$ and $n_1>1$, then $\mathcal{Y}\in (R_{h,\exp})^{n}$.
\end{propo}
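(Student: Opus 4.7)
The plan is to split $Z$ via the block upper triangular decomposition and treat the two blocks separately: the lower block by induction, and the upper block by combining a particular solution in $R_{h,\exp}$ with the irreducibility given by Proposition~\ref{prop: colonne}. First, since $Z = T\mathcal{Y}$ with $T \in \GL_n(\C(x))$, each entry of $Z_2$ is a $\C(x)$-linear combination of entries of $\mathcal{Y}$, hence differentially algebraic over $\C(x)$. The vector $Z_2$ satisfies $\rho(Z_2) = B_{2,2} Z_2$ of size $n_2 < n$ with coefficients in $\C(x)$, so the induction hypothesis of Theorem~\ref{thm} yields $Z_2 \in (R_{h,\exp})^{n_2}$. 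In particular, the source term $F := B_{1,2} Z_2$ lies in $(R_{h,\exp})^{n_1}$, and $Z_1$ satisfies the inhomogeneous equation $\rho(Z_1) - B_{1,1} Z_1 = F$.

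Next, I construct a differentially algebraic particular solution $W \in (R_{h,\exp})^{n_1}$ of $\rho(W) - B_{1,1} W = F$. Decomposing $F = \sum_{\lambda \in \Lambda} F_\lambda(x) e^{\lambda x}$ with $\Lambda \subset \C$ finite and $F_\lambda \in (C_{\ell h}(x))^{n_1}$ for a common $\ell$, I look for $W = \sum_{\lambda \in \Lambda} W_\lambda(x) e^{\lambda x}$. Matching coefficients reduces the problem, for each $\lambda$, to the inhomogeneous shift equation $\rho(W_\lambda) - e^{-\lambda h} B_{1,1} W_\lambda = e^{-\lambda h} F_\lambda$ of size $n_1$ over $\C(x)$ with source in $(C_{\ell h}(x))^{n_1}$. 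A meromorphic particular solution of the original equation is available via $U_{1,2} m_2$, extracted from a block upper triangular fundamental matrix $\mathcal{U} = \begin{pmatrix} U_{1,1} & U_{1,2} \\ 0 & U_{2,2} \end{pmatrix}$ and the coordinates $(m_1, m_2) \in C_h^{n_1} \times C_h^{n_2}$ of $Z$ in this basis; combining this a priori existence with the exponential decomposition should produce the required $W_\lambda \in (C_{\ell' h}(x))^{n_1}$ for some $\ell'$.

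With such a $W$ in hand, $Z_1 - W$ satisfies the homogeneous system $\rho(Y) = B_{1,1} Y$ of size $n_1$ with coefficients in $\C(x)$, and has all its entries differentially algebraic over $\widetilde{C_h}(x)$ (those of $Z_1$ by hypothesis, those of $W$ by construction). By minimality of $n_1$, the difference Galois group of $\rho(Y) = B_{1,1} Y$ over $\C(x)$ is irreducible; as a quotient of the connected group $G = \Gal(\mathcal{Q}|\C(x))$, it is also connected. Since $n_1 \geq 2$, Proposition~\ref{prop: colonne} asserts that every nonzero meromorphic solution of $\rho(Y) = B_{1,1} Y$ has at least one entry differentially transcendental over $\widetilde{C_h}(x)$, forcing $Z_1 - W = 0$. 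Hence $Z_1 = W \in (R_{h,\exp})^{n_1}$, $Z \in (R_{h,\exp})^n$, and $\mathcal{Y} = T^{-1} Z \in (R_{h,\exp})^n$ since $T^{-1} \in \GL_n(\C(x)) \subset \GL_n(R_{h,\exp})$.

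The main obstacle is the middle step: descending a meromorphic particular solution of the shifted inhomogeneous system into $(R_{h,\exp})^{n_1}$ while ensuring the result remains differentially algebraic. The exponential decomposition of $F$ provides the structural hook, but controlling the periodic coefficients $W_\lambda$ and their period $\ell'$ requires a delicate analysis exploiting the differential algebraicity of the existing meromorphic solution $U_{1,2}m_2$; this is the step where the strategy must diverge from \cite{adamczewski2021hypertranscendence} because of the exponentials and periodic functions specific to the shift case.
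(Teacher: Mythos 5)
Your opening and closing steps match the paper: $Z_2$ falls under the induction hypothesis, and at the end Proposition~\ref{prop: colonne} (for the irreducible connected group $G_1$ with $n_1\geq 2$) is used to kill a homogeneous solution with differentially algebraic entries. But the middle step, which carries all the weight, is not a proof: you need a particular solution $W\in (R_{h,\exp})^{n_1}$ of $\rho(W)=B_{1,1}W+F$, and you offer no argument for its existence beyond "matching exponentials" and the phrase "should produce the required $W_\lambda$". The reduction to $\rho(W_\lambda)=e^{-\lambda h}B_{1,1}W_\lambda+e^{-\lambda h}F_\lambda$ with $F_\lambda\in (C_{\ell h}(x))^{n_1}$ does not help, because an inhomogeneous shift system with source in a small ring need not have any solution in that ring (already for the scalar model $\rho(y)=y+1/x$ there is no rational solution); the obstruction is exactly the unipotent part of the Galois group of the full block-triangular system, and nothing in your construction rules it out. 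Worse, your construction of $W$ uses only $F\in(R_{h,\exp})^{n_1}$ and the always-available meromorphic particular solution $U_{1,2}m_2$, i.e.\ it does not use the hypothesis that $Z_1$ is differentially algebraic; if such a $W$ always existed under those inputs alone, the proposition's conclusion would essentially follow for arbitrary meromorphic solutions, which is false. The differential algebraicity of $Z_1$ must enter precisely at this point, and the paper makes it enter through the parametrized Galois correspondence: in Lemma~\ref{lem4}, if $\mathcal{Y}_1$ did not lie in the $(\rho,\delta)$-field $\widetilde{K_0}$ generated by $\mathcal{Y}_2$ over $\widetilde{C_h}(x)$, a Galois element $\sigma$ fixing $\mathcal{Y}_2$ but moving $\mathcal{Y}_1$ yields the nonzero homogeneous solution $w=\mathcal{Y}_1-\sigma(\mathcal{Y}_1)$ with differentially algebraic entries, contradicting Proposition~\ref{prop: colonne}; membership in $R_{h,\exp}$ is then obtained afterwards by the descent Lemma~\ref{lem3} and the simple-$\rho$-ring argument of Lemma~\ref{lem1}, not by exhibiting a particular solution in $R_{h,\exp}$ beforehand.

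Two secondary points would also need repair even if the middle step were fixed. First, Proposition~\ref{prop: colonne} is stated for columns of a meromorphic fundamental matrix, so applying it to the single vector $Z_1-W$ requires promoting that vector to the first column of a fundamental solution and transferring differential algebraicity to a meromorphic fundamental matrix; the paper does this via Lemma~\ref{lem2} and an isomorphism of $(\rho^s,\delta)$-Picard--Vessiot extensions, and this passage (including the replacement of $\rho$ by $\rho^s$ and the connectedness of $G_1$) is not free. Second, entries of $R_{h,\exp}$ involve $C_{\ell h}$-periodic coefficients, which lie in $\widetilde{C_h}$ only after the reduction replacing $h$ by $\ell h$; the paper performs this reduction explicitly before invoking differential algebraicity over $\widetilde{C_h}(x)$, and your sketch would need the same care for the entries of $W$.
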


Let $G_1$ be the difference Galois group of $\rho (Y)=B_{1,1}Y$ over $\C(x)$.  The group $G_1$ is irreducible and connected,  see  \cite[Section 5.3]{adamczewski2021hypertranscendence}.   \par 
Let $(g_1, \dots , g_{n})^{\top}= T\mathcal{Y}$. The entries of $\mathcal{Y}$ are all differentially algebraic over $\C(x)$ if and only if the same holds for the entries of $(g_1, \dots , g_{n})^{\top}$. Similarly, $\mathcal{Y}\in (R_{h,\exp})^{n}$ if and only if  $(g_1, \dots , g_{n})^{\top}\in (R_{h,\exp})^{n}$. So it suffices to show that if all the entries of $(g_1, \dots , g_{n})^{\top}$ are differentially algebraic over $\C(x)$,  then $(g_1, \dots , g_{n})^{\top}\in (R_{h,\exp})^{n}$.  
Let $\mathcal{Y}_1$ be the vector $(g_1, \dots , g_{n_1})^{\top}$ and  $\mathcal{Y}_2$ be the vector $(g_{n_1 +1 }, \dots , g_{n})^{\top}$.  We have $\rho(\mathcal{Y}_1)=B_{1,1} \mathcal{Y}_1+B_{1,2}\mathcal{Y}_2$ and $\rho(\mathcal{Y}_2)=B_{2,2} \mathcal{Y}_2$.   By induction hypothesis, when each entry of $\mathcal{Y}$ is differentially algebraic,  we find that  $\mathcal{Y}_2\in (R_{h,\exp})^{n_2}$.  
Then, when each entry of $\mathcal{Y}$ is differentially algebraic over $\C(x)$, we have 
\begin{equation}\label{eq2}
\mathcal{Y}\in (R_{h,\exp})^{n}\Longleftrightarrow \mathcal{Y}_1\in (R_{h,\exp})^{n_1}.
\end{equation}
Toward the proof of Proposition \ref{prop1},  we are going to prove that when each  entry of $\mathcal{Y}$ is differentially algebraic  over $\C(x)$, then the entries of $\mathcal{Y}$ belong to a certain ring. 
  Since $\mathcal{Y}_2\in (R_{h,\exp})^{n_2}$, there exist $\lambda_1,\dots, \lambda_k \in \C$  and $\ell\in \N^*$ such that   the entries of  $ \mathcal{Y}_2$ belong to $C_{\ell h}(x)[e^{\lambda_1 x},\dots, e^{\lambda_k x}]$.  By replacing the $\rho$-equation with a $\rho^{\ell}$-equation,  we may reduce to the case where $\ell=1$. 
Recall that $\widetilde{C_h}$ is a differentially closed field containing $C_h$.  Let $\widetilde{K_0}$,  be the $(\rho,\delta)$-ring that is the total ring of fraction of the ring generated by the entries of $\mathcal{Y}_2$ over $\widetilde{C_h}(x)$. 

\begin{lem}\label{lem4} If  each  entry of $\mathcal{Y}$ is differentially algebraic  over $\C(x)$,  and $n_1>1$,  then $\mathcal{Y}\in (\widetilde{K_0})^{n}$. 
\end{lem}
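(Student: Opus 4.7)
The plan is to apply the $(\rho,\delta)$-Galois formalism to the homogeneous subsystem $\rho(Y)=B_{1,1}Y$ over $\widetilde{K_0}$, following the strategy of \cite[Section 5]{adamczewski2021hypertranscendence} and using Propositions \ref{prop: colonne} and \ref{propo:AS} as the main Galois-theoretic inputs. Since $\mathcal{Y}_2\in\widetilde{K_0}^{n_2}$ by construction, the vector $\mathbf{b}:=B_{1,2}\mathcal{Y}_2$ lies in $\widetilde{K_0}^{n_1}$, so $\mathcal{Y}_1$ satisfies the inhomogeneous equation $\rho(Y)=B_{1,1}Y+\mathbf{b}$ over $\widetilde{K_0}$, with entries that remain differentially algebraic over $\widetilde{K_0}$. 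I would encode this as the $(n_1+1)$-dimensional homogeneous system
\begin{equation*}
\rho\begin{pmatrix}Y\\z\end{pmatrix}=\begin{pmatrix}B_{1,1}&\mathbf{b}\\0&1\end{pmatrix}\begin{pmatrix}Y\\z\end{pmatrix},
\end{equation*}
which admits $(\mathcal{Y}_1,1)^\top$ as a solution, and fix a fundamental solution of the form $\begin{pmatrix}W&\eta\\0&1\end{pmatrix}$ with $W\in\GL_{n_1}(\mathcal{M}(\C))$ a fundamental solution of $\rho(Y)=B_{1,1}Y$ (existing by \cite{Pra}) and $\eta$ a particular solution of the inhomogeneous equation. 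Writing $\mathcal{Y}_1=Wc+\eta$ for some $c\in\widetilde{C_h}^{n_1}$, the task reduces to showing $c=0$ together with $\eta\in\widetilde{K_0}^{n_1}$.

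The second step is to verify that the difference Galois group of $\rho(Y)=B_{1,1}Y$ over $\widetilde{K_0}$ is still $G_1(\widetilde{C_h})$ and that the $(\rho,\delta)$-Galois group coincides with it. Since $\widetilde{K_0}$ embeds in the $(\rho,\delta)$-Picard-Vessiot field of $\rho(Y)=AY$ supplied by Lemma \ref{lem6}, its constants satisfy $\widetilde{K_0}^\rho=\widetilde{C_h}$, so both Picard-Vessiot theories over $\widetilde{K_0}$ apply cleanly. Moreover $\widetilde{K_0}$ is generated over $\widetilde{C_h}(x)$ by entries of the solution $\mathcal{Y}_2$ of the ``disjoint'' subsystem $\rho(Y)=B_{2,2}Y$; combining this with the block-triangular structure of $\rho(Z)=BZ$ and the Galois correspondence forces the Galois group of the first block over $\widetilde{K_0}$ to still be $G_1$. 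The argument of Proposition \ref{prop: colonne}, transported verbatim with $\widetilde{K_0}$ replacing $\C(x)$ and using Proposition \ref{propo:AS}, then yields that the $(\rho,\delta)$-Galois group of $\rho(Y)=B_{1,1}Y$ over $\widetilde{K_0}$ equals $G_1$, and in particular that every nonzero vector $Wc$ has a coordinate differentially transcendental over $\widetilde{K_0}$.

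To finish, I would analyze the $(\rho,\delta)$-Galois group $H_M$ of the extended $(n_1+1)$-dimensional system over $\widetilde{K_0}$. By block-triangularity, $H_M$ sits inside the group of matrices $\begin{pmatrix}g&v\\0&1\end{pmatrix}$ with $g\in G_1$ and $v\in\widetilde{C_h}^{n_1}$; the diagonal projection is onto $G_1$ by the previous step, and the additive kernel $V_M\subset\widetilde{C_h}^{n_1}$ is a $\delta$-closed subgroup measuring the $(\rho,\delta)$-transcendence of $\eta$ relative to $W$. Assume for contradiction $\mathcal{Y}_1\notin\widetilde{K_0}^{n_1}$, so either $c\neq 0$ or $V_M\neq\{0\}$. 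If $c\neq 0$, the previous paragraph produces a differentially transcendental entry of $Wc$ over $\widetilde{K_0}$; if $V_M\neq\{0\}$, a similar parametrized argument exhibits a differentially transcendental contribution from $\eta$. A rigidity argument—the diagonal and additive parts of $H_M$ act differentially-algebraically independently on the pair $(W,\eta)$—prevents any cancellation, contradicting the differential algebraicity of the entries of $\mathcal{Y}_1$. Hence $c=0$ and $V_M=\{0\}$, giving $\mathcal{Y}_1=\eta\in\widetilde{K_0}^{n_1}$.

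The main obstacle is this last rigidity argument: ruling out cancellation between the $Wc$-contribution and the $\eta$-contribution when both may be differentially transcendental. Technically this requires analyzing the extension class of the block matrix $\begin{pmatrix}B_{1,1}&\mathbf{b}\\0&1\end{pmatrix}$ and invoking a semi-simplicity input along the lines of Proposition \ref{propo:AS} combined with the irreducibility of $G_1$, in order to conclude that the diagonal and unipotent parts of $H_M$ act differentially-algebraically independently on $(W,\eta)$.
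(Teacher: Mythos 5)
Your proposal has two genuine gaps, and the second one is the heart of the lemma. First, the base-change step is unjustified: you assert that the difference (and $(\rho,\delta)$-) Galois group of $\rho(Y)=B_{1,1}Y$ over $\widetilde{K_0}$ is still $G_1$ because $\widetilde{K_0}$ is generated by solutions of the ``disjoint'' block $\rho(Y)=B_{2,2}Y$. But nothing makes the two blocks disjoint: $B_{2,2}$ may share solutions with $B_{1,1}$ (even be equal to it), in which case adjoining the entries of $\mathcal{Y}_2$ can strictly shrink the Galois group of the first block, and your key claim that every nonzero $Wc$ has an entry differentially transcendental over $\widetilde{K_0}$ collapses. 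Second, and more seriously, the ``rigidity'' step you yourself flag as the main obstacle --- ruling out cancellation between the $Wc$-part and the $\eta$-part of $\mathcal{Y}_1$, i.e.\ controlling the unipotent/additive part $V_M$ of the Galois group of the extended $(n_1+1)$-dimensional system --- is exactly what a proof must establish, and neither Proposition~\ref{prop: colonne} nor Proposition~\ref{propo:AS} gives it to you (they concern the homogeneous block, not the extension class). As written, the argument assumes its hardest point. (A smaller issue: writing $\mathcal{Y}_1=Wc+\eta$ and ``reducing to $c=0$ and $\eta\in\widetilde{K_0}^{n_1}$'' is not a faithful reformulation, since $\eta$ is only determined up to adding $Wc'$.)

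The paper's proof avoids both problems by never analyzing the inhomogeneous system's Galois group at all. It works inside the single $(\rho,\delta)$-Picard--Vessiot field $\widetilde{C_h}(x)\langle V\rangle_{\delta}$ of the full block system (Lemma~\ref{lem6}) and argues by contraposition: if $\mathcal{Y}_1\notin(\widetilde{K_0})^{n_1}$, the $(\rho,\delta)$-Galois correspondence of \cite[Theorem~2.7]{HS08} provides $\sigma$ fixing the entries of $\mathcal{Y}_2$ (hence $\widetilde{K_0}$) but moving $\mathcal{Y}_1$; then $w:=\mathcal{Y}_1-\sigma(\mathcal{Y}_1)$ is a nonzero solution of the \emph{homogeneous} system $\rho(w)=B_{1,1}w$ whose entries are still differentially algebraic over $\widetilde{C_h}(x)$, because $\sigma$ kills the inhomogeneous term $B_{1,2}\mathcal{Y}_2$. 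Lemma~\ref{lem2} then realizes $w$ as the first column of a fundamental matrix for an iterated system whose group is still the connected irreducible $G_1$, a meromorphic fundamental solution is produced via \cite{Pra}, and Proposition~\ref{prop: colonne} forces $n_1=1$. In other words, the one-automorphism difference trick does all the work that your missing rigidity argument would have to do; if you want to salvage your route, you would need to prove precisely that statement about the additive part of the Galois group of the extended system, which is not supplied by the cited results.
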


\begin{proof}
Assume that each  entry of $\mathcal{Y}$ is differentially algebraic  over $\C(x)$. Then each entry of $\mathcal{Y}_1$ is  differentially algebraic  over $\C(x)$. Furthermore, since $(\mathcal{Y}_1,\mathcal{Y}_2)^{\top}=T\mathcal{Y}$, we have 
$\mathcal{Y}\in (\widetilde{K_0})^n$ if and only if $\mathcal{Y}_1\in (\widetilde{K_0})^{n_1}$.  Let us prove the contrapositive:  let us assume that  $ \mathcal{Y}_1\notin (\widetilde{K_0})^{n_1}$ and let us prove that $n_1=1$. Recall that $U\in \mathrm{GL}_n(\mathcal{M}(\C))$ is such that 
$\rho(U)=AU$. Let $V=TU$; this is a fundamental matrix of  $\rho(Y)=BY$.  By Lemma  \ref{lem6},  
 $\mathcal{Q}_S$, 
the total ring of fractions of $\widetilde{C_h}(x) \{ V,1/\det(V) \}_{\delta}$, is a $(\rho,\delta)$-Picard-Vessiot extension for $\rho(Y)=BY$ over $\widetilde{C_h}(x)$.
  Let $H=\mathrm{Gal}^{\delta}(\mathcal{Q}_S|\widetilde{C_h}(x))$ be the $(\rho,\delta)$-Galois group of 
$\rho (Y)=BY$ over $\widetilde{C_h}(x)$.  
By the $(\rho,\delta)$-Galois correspondence, see \cite[Theorem~2.7]{HS08},
we deduce the existence of some ${\sigma \in H}$, such that $\sigma(g_{i})=g_i$ for every $i$, $n_1+1\leq i\leq n$ and 
$$\left( \sigma(g_{i})\right)_{  i\leq n}^{\top}\neq\left(g_{i}\right)_{  i\leq n}^{\top}.
$$
Hence $w:=\mathcal{Y}_1-\sigma(\mathcal{Y}_1)$ is a nonzero vector. 
Since the coordinates of $\mathcal{Y}_1$ are differentially algebraic over $\widetilde{C_h}(x)$ and 
$\sigma$ belongs to the  $(\rho,\delta)$-Galois group $H$, 
the coordinates of $\sigma(\mathcal{Y}_1)$ are also differentially algebraic over $\widetilde{C_h}(x)$.  Therefore
the coordinates of $w$ are  differentially algebraic over $\widetilde{C_h}(x)$. 
Furthermore, $\mathcal{Y}_1$ and $\sigma(\mathcal{Y}_1)$ are both solution to the system 
$$
\rho(Y)=B_{1,1}Y+B_{1,2}\mathcal{Y}_2 \,.
$$
It follows that 
$$
\rho(w) =B_{1,1}w\,.
$$

Since 
we have 
$$
\widetilde{C_h}=\widetilde{C_h}(x)^\rho\subset (\mathcal{Q}_S)^\rho =\widetilde{C_h}\,,
$$ 
we find by an adaptation of Lemma \ref{lem2},   the existence of a positive integer $s$ and a  $(\rho^{s},\delta)$-Picard-Vessiot field extension 
$\mathcal{Q}_1$ for the system $\rho^s (Y)=(B_{1,1})_{[s]}Y$ 
over $\widetilde{C_h}(x)$ such that $w$ is the first column of a fundamental matrix $W$. 
Furthermore, 
the difference Galois group of   $\rho^s (Y)=(B_{1,1})_{[s]}Y$ over $\C(x)$ is equal to $G_1$ since the latter is connected.  
Let us build a fundamental matrix of $\rho^s (Y)=(B_{1,1})_{[s]}Y$ with coefficients in $\mathcal{M}(\C)$ and whose all coordinates of the first column are differentially algebraic over $\widetilde{C_h}(x)$.
 By \cite[Theorem 1]{Pra}, there exists $W_1\in \mathrm{GL}_{n_1}(\mathcal{M}(\C))$ such that $\rho^{s}(W_1)=(B_{1,1})_{[s]}W_1$.  By Lemma~\ref{lem6}, the total ring of fractions of $\widetilde{C_h}(x) \{ W_1,1/\det(W_1) \}_{\delta}$ is a $(\rho^s,\delta)$-Picard-Vessiot extension for $\rho^{s}(Y)=(B_{1,1})_{[s]}Y$ over $\widetilde{C_h}(x)$.  Recall that the entries of $w$ are differentially algebraic over $\widetilde{C_h}(x)$.
 Since the two  $(\rho^s,\delta)$-Picard-Vessiot ring extensions $\widetilde{C_h}(x) \lbrace W ,1/\det(W)\rbrace_{\delta}$ and $\widetilde{C_h}(x) \lbrace W_1, 1/\det(W_1) \rbrace_{\delta}$  are isomorphic, we deduce that without loss of generality,  we may assume that the entries of the first column of $W_1$ are differentially algebraic over $\widetilde{C_h}(x)$ too. 
The group $G_{1}$ being 
connected and irreducible, Proposition~\ref{prop: colonne} implies that $n_1=1$.  This proves the result by contraposition.
\end{proof}
 Let us descend from $\widetilde{K_0}$ to $\widetilde{C_h}\otimes R_{h,\exp}$.

 \begin{lem}\label{lem3} If  $\mathcal{Y}\in (\widetilde{K_0})^{n}$,  then $\mathcal{Y}\in (\widetilde{C_h}\otimes R_{h,\exp})^{n}$. 
\end{lem}

\begin{proof}
Recall that   $\mathcal{Y}$ is solution of  $\rho(Y)=AY$ and is the first column of the fundamental matrix $U\in \mathrm{GL}_n(\mathcal{M}(\C))$.
By Lemma \ref{lem5},  ${\mathcal{R}=\widetilde{C_h}(x)[U,1/\det(U)]}$ is a Picard-Vessiot ring extension for $\rho(Y)=AY$ over 
$\widetilde{C_h}(x)$.  Let us consider  $\lambda_1,\dots,\lambda_k \in \C$ used in the definition of  $\widetilde{K_0}$.
Let $W=\mathrm{Diag}(e^{\lambda_1 x},\dots, e^{\lambda_k x} )$.
 By Lemma~\ref{lem5},   $\mathcal{R}_1=\widetilde{C_h}(x)[W,1/\det(W)]$, is a  Picard-Vessiot ring extension for $\rho(Y)=\mathrm{Diag}(e^{\lambda_1 h},\dots, e^{\lambda_k h}) Y$ over 
$\widetilde{C_h}(x)$.
The following  idea  has been suggested by Charlotte Hardouin. 
Consider the ideal $$I_1=\{s\in \mathcal{R}_1 \vert s \mathcal{Y} \in \big(\mathcal{R}_1\big)^{n}\} .$$ Let us prove that it is a $\rho$-ideal. 
Let $s\in I_1$. 
We have $$\rho(s)\mathcal{Y}=\rho(s)A^{-1}A\mathcal{Y}=A^{-1}\rho(s\mathcal{Y}).$$
With  $s\mathcal{Y} \in \big(\mathcal{R}_1\big)^{n}$ we therefore deduce that we have   ${A^{-1}\rho(s \mathcal{Y}) \in \big(\mathcal{R}_1 \big)^{n}}$. 
Hence $\rho(s)\in I_1$.   This shows that $I_1$ is a $\rho$-ideal.  The latter is not reduced to $(0)$ since it contains the common denominator of the entries of $\mathcal{Y}$.
Since $\mathcal{R}_1$ is a simple $\rho$-ring,  and $I_1\neq (0)$, we deduce that $1\in I_1$, proving that $\mathcal{Y}$ has its entries  in  $\widetilde{C_h}(x)[e^{\lambda_1 x},\dots, e^{\lambda_k x},e^{-(\lambda_1 +\dots+ \lambda_k)x}]$.   This completes the proof.
\end{proof}

Let us finish the proof of Proposition \ref{prop1}.  Assume that the entries of $\mathcal{Y}$ are differentially algebraic over $\C(x)$ and $n_1>1$.   By  Lemma \ref{lem4},   and Lemma~\ref{lem3}, ${\mathcal{Y}\in (\widetilde{C_h}\otimes R_{h,\exp})^{n}}$.
The following lemma will terminate the proof of Proposition~\ref{prop1}.
\begin{lem}\label{lem1}
If $\mathcal{Y}\in (\widetilde{C_h}\otimes R_{h,\exp})^{n}$,  then  $\mathcal{Y}\in (R_{h,\exp})^{n}$. 
\end{lem}

\begin{proof}
Fix $1\leq i \leq n$ and let us prove that the entry number $i$ of $\mathcal{Y}$ belongs to $R_{h,\exp}$. Since the dependence in $i$ has no effect on the proof let us omit it in the notation and let $\widehat{y}$ be the entry number $i$ of $\mathcal{Y}$. Since $\widehat{y}\in \widetilde{C_h}\otimes R_{h,\exp}$, we have  $\widehat{y}=\tilde{c}_1 f_1 +\dots+ \tilde{c}_{\kappa}f_{\kappa}$ with $\tilde{c}_i\in \widetilde{C_h}$ and $f_i\in R_{h,\exp}$. Clearing denominators, we find the existence of $0\neq P\in C_h[x]$ of minimal degree such that 
we may write $P\widehat{y}=c_1\widehat{z}_1+\dots+c_{\kappa}\widehat{z}_{\kappa}$ with $c_i\in\widetilde{C_h}$ and  $\widehat{z}_i=\sum_j d_{i,j} e^{\lambda_{i,j} x}$, $d_{i,j} \in C_h [x]$, $\lambda_{i,j} \in \C$.
Since $\widehat{y}\in R_{h,\exp}$ if and only if $P\widehat{y}\in R_{h,\exp}$ it suffices to show that  $P\widehat{y}\in R_{h,\exp}$. Without loss of generality we may assume that the $\widehat{z}_i$ are $C_h$-linearly independent. Let us see  $\widehat{z}_i$ as a polynomial with coefficients in $C_h$, in $x$ and various exponentials  $e^{\lambda x}$, $\lambda\in \C$.
Then, 
$\widehat{z}_i$ is sum and product of elements of $C_h$, powers of $x$, and $e^{\lambda x}$, $\lambda\in \C$. For each of   those functions, there exists 
 a nontrivial linear difference equation in coefficients in $\C(x)$ that  vanishes on it. The set of  meromorphic functions on $\C$ annihilated by  some nontrivial linear difference equation in coefficients in $\C(x)$ form a ring. It then follows that for all $i$, there is a nontrivial linear difference equation with coefficients in $\C(x)$ annihilating $\widehat{z}_i$. Furthermore, we may  consider $\mathcal{L}$ a nonzero linear  difference equation in coefficients in $\C(x)$ of order greater than $\kappa$, such that for all $i$, $\mathcal{L}(\widehat{z}_i)=0$. Let  $\rho (Y)=\widetilde{A}Y$ be the corresponding system, where $\widetilde{A}$ is a companion matrix, and let $m\geq \kappa$ be its size. Since for all $i$, $\rho(c_i)=c_i$, we have $\mathcal{L}(P\widehat{y})=0$. 
With  Lemma~\ref{lem2} we obtain the existence of $W\in \mathrm{GL}_m (\mathcal{M}(\C))$ with $\rho(W)=\widetilde{A}W$ such that the $\kappa$ first columns are the $(\widehat{z}_i, \rho(\widehat{z}_i),\dots, \rho^{m-1}(\widehat{z}_i))^{\top}$, $1\leq i \leq \kappa$. Similarly, let $U\in \mathrm{GL}_m (\mathcal{M}(\C))$ with $\rho(U)=\widetilde{A}U$ such that the  first
column is  $(P\widehat{y}, \rho(P\widehat{y}),\dots, \rho^{m-1}(P\widehat{y}))^{\top}$.
Since $U$ and $W$ are solutions of $\rho(Y)=\widetilde{A}Y$ with  coefficients in $\mathrm{GL}_m (\mathcal{M}(\C))$, there exists $C_0\in\mathrm{GL}_m ((\mathcal{M}(\C))^{\rho})=\mathrm{GL}_n (C_h)$ such that $U=WC_0$. Recall that $P\widehat{y}=c_1\widehat{z}_1+\dots+c_{\kappa}\widehat{z}_{\kappa}$ with $c_i\in\widetilde{C_h}$ so that for all $i$, 
$\rho^i (P\widehat{y})=c_1\rho^i(\widehat{z}_1)+\dots+c_{\kappa}\rho^i(\widehat{z}_{\kappa})$.
Let us replace the first column of $C_0$ by  $(c_1,\dots ,\dots,c_{\kappa},0\dots)^{\top}$ to obtain a new matrix $\widetilde{C}_0$ with coefficients in $\widetilde{C_h}$. In the ring $\widetilde{C_h}\otimes \mathcal{M}(\C)$ we have
 $U=W\widetilde{C}_0$ and then multiplying to the left by $W^{-1}$, we obtain $W^{-1}U=C_0=\widetilde{C}_0$. Since the first column of $C_0$ has coefficients in $C_h$ we find that the $c_i$ are in $C_h$. Hence $P\widehat{y}\in R_{h,\exp}$ as expected.
\end{proof}

\begin{center}
Step 3: Affine case.
\end{center}
Let us finish the proof of Theorem \ref{thm}.  Let us recall that we assume that the entries of $\mathcal{Y}$ are differentially algebraic over $\C(x)$ and we have to prove that $\mathcal{Y}\in (R_{h,\exp})^{n}$.
By  Proposition~\ref{prop1},  it remains to consider the case where $n_1=1$. By \eqref{eq2} it suffices to show that $g_1\in R_{h,\exp}$.
Recall that $g_1$ is a differentially algebraic solution of the affine equation $\rho(g_1)=ag_1+b$ where $a=B_{1,1}\in \C(x)$ and $b=B_{1,2}(g_2,\dots, g_n)^\top \in R_{h,\exp}$.   
Let $\widetilde{C_h}$ be a differentially closed field containing   $C_h$.  By \cite[Proposition~3.8]{HS08},     the $(\rho,\delta)$-Galois group of $\rho(y)=ay$ over $\widetilde{C_h}(x)$ is not all of $\mathrm{GL}_1 (\widetilde{C_h})$.    By \cite[Theorem~1]{Pra},  let $h_0\in \mathcal{M}(\C)$ be a nonzero solution of $\rho(y)=ay$.   In virtue of  Lemma~\ref{lem6}, the total ring of fractions of $\widetilde{C_h}(x) \{ h_0,1/h_0 \}_{\delta}$ is a $(\rho,\delta)$-Picard-Vessiot extension for $\rho(Y)=aY$ over $\widetilde{C_h}(x)$.
With \cite[Proposition~6.26]{HS08},  $h_0$ is differentially algebraic over $\widetilde{C_h}(x)$. 
By Remark \ref{rem1}, we may  use    \cite[Corollary~3.4]{HS08},  to find that  there exist nonzero $c\in \C$ and $g\in \C(x)$ such that $a=c\rho(g)/g$.  Let $c'\in \C$ such that $e^{c' h}=c$. We have $g_1\in R_{h,\exp}$ if and only if $\hat{g}:=\frac{g_1}{e^{c'x}g}\in R_{h,\exp}$.   We have $\rho(\hat{g})=\hat{g}+\frac{b}{\rho(e^{c'x}g)}$.
 The latter is solution of an equation of the form $\rho(y)=y+b'$ where $b':=\frac{b}{\rho(e^{c'x}g)}\in R_{h,\exp}$.
 So without loss of generality  we may reduce to the case where $a=1$.  Then, $g_1$ is solution of $\rho(y)=y+b$ where  $b\in R_{h,\exp}$. 
Let $\ell\in \N^*$ and $\lambda_1,\dots, \lambda_k  \in \C$,    such that  $b\in C_{\ell h} (x)[e^{\lambda_1 x},\dots, e^{\lambda_k x}]$.
We have $\rho^{\ell}(g_1)=g_1+b_{[\ell]}$, with $b_{[\ell]}:=b+\dots+\rho^{\ell-1}(b)\in C_{\ell h} (x)[e^{\lambda_1 x},\dots, e^{\lambda_k x}]$.  Since $R_{\ell h, \exp} \subset R_{h, \exp}$ is   $(\rho^{\ell},\delta)$-ring,  we may  replace the $\rho$-equation by a $\rho^{\ell}$-equation to reduce to the case $\ell=1$.  The fundamental matrix $\mathrm{Diag}(e^{\lambda_1 x},\dots, e^{\lambda_k x})$ is solution of $\rho(Y)=\mathrm{Diag}(e^{\lambda_1 h},\dots, e^{\lambda_k h})Y$ so we have $k$ vector columns that are linearly independent. We deduce with  Lemma \ref{lem2},  with $\kappa=k$, that there exists $\ell\geq 1$, such that 
$\mathrm{Diag}(e^{\lambda_1 x},\dots, e^{\lambda_k x})$ is a fundamental matrix of $\rho^{\ell}(Y)=\mathrm{Diag}(e^{\lambda_1 h},\dots, e^{\lambda_k h})_{[\ell]}Y$ and 
the $(\rho^{\ell},\delta)$-Picard-Vessiot ring extension for  $\rho^{\ell}(Y)=\mathrm{Diag}(e^{\lambda_1 h},\dots, e^{\lambda_k h})_{[\ell]}Y$ over $\widetilde{C_{h}}(x)$  is an integral domain. Then 
$\widetilde{C_{h}}(x)\{e^{\lambda_1 x},\dots,e^{\lambda_k x},e^{-(\lambda_1+\dots+ \lambda_k) x}\}_{\delta}$ is a $(\rho^{\ell},\delta)$-Picard-Vessiot ring extension and is an integral domain. Again, replacing the $\rho$-equation by a $\rho^{\ell}$-equation we may reduce to the case where the $(\rho,\delta)$-ring $\widetilde{C_{h}}(x)\{e^{\lambda_1 x},\dots,e^{\lambda_k x},e^{-(\lambda_1+\dots+ \lambda_k) x}\}_{\delta}$ in an integral domain and its field of fractions has $\widetilde{C_{h}}$ as field of constants.

By   \cite[Proposition~3.1]{HS08},  there exists a linear differential operator with coefficients in  $\widetilde{C_{h}}$ and $g\in \mathrm{Frac}( \widetilde{C_{h}}(x)\{e^{\lambda_1 x},\dots, e^{\lambda_k x}\}_{\delta})= \widetilde{C_{h}}(x)(e^{\lambda_1 x},\dots, e^{\lambda_k x})$,  such that $L(b)=\rho(g)-g$.  
 Consider $W=\mathrm{Diag}(e^{\lambda_1 x},\dots,e^{\lambda_k x} )$.
The field $\widetilde{C_h}$ is algebraically closed.  By  Lemma~\ref{lem5},   $\mathcal{R}_1=\widetilde{C_h}(x)[e^{\lambda_1 x},\dots,e^{\lambda_k x},e^{-(\lambda_1+\dots+ \lambda_k) x}]$, is a  Picard-Vessiot ring extension for $\rho(Y)=\mathrm{Diag}(e^{\lambda_1 h},\dots, e^{\lambda_k h}) Y$ over 
$\widetilde{C_h}(x)$.
Consider the ideal  $$J_1=\{s\in \mathcal{R}_1 \vert s g \in \mathcal{R}_1\}.$$
Let us prove that it is a $\rho$-ideal. Let $s\in J_1$. We have $\rho(sg)\in \mathcal{R}_1$. But $\rho(sg)=\rho(s)(g+L(b))$ and $\rho(s)L(b)\in \mathcal{R}_1$, proving that $\rho(s)g\in \mathcal{R}_1$. Hence $\rho(s)\in J_1$ showing that $J_1$ is a $\rho$-ideal.
Since $\mathcal{R}_1$ is a simple $\rho$-ring,  and by construction $J_1\neq (0)$, we deduce that  $1\in J_1$ and therefore $$g\in \widetilde{C_{h}}(x)[e^{\lambda_1 x},\dots, e^{\lambda_k x},e^{-(\lambda_1 +\dots+ \lambda_k)x}].$$  
 Let us write $b=\sum b_i e^{\lambda_i x}$,  $g=\sum \tilde{g}_i e^{\lambda_i x} $,  $b_i,\tilde{g}_i\in  \widetilde{C_{h}}(x)$,  $\lambda_i\in \C$.
 Then,  using the $\widetilde{C_{h}}(x)$-linear independence of the $e^{\lambda x}$, $\lambda\in \C$,
  we find  equations of the form $L_i (b_i)=e^{h\lambda_i} \rho (\tilde{g}_i)-\tilde{g}_i $, where $L_i$ is a linear differential operator with coefficients in $\widetilde{C_{h}}$.
By Remark \ref{rem1}, we may  use    \cite[Lemma~6.4]{HS08},  to deduce the existence of $\hat{g}_i\in  \widetilde{C_{h}}(x)$ such that 
$b_i= e^{h\lambda_i}  \rho (\hat{g}_i)-\hat{g}_i $.   
Then,  there exists $g'\in  \mathrm{Vect}_{\widetilde{C_{h}}(x)}[e^{\lambda_1 x},\dots,  e^{\lambda_k x}]$, such that  $b=\rho(g')-g'$.  Equating the constant coefficients yields  the existence of $g''\in R_{h,\exp}$ such that 
$b=\rho(g'')-g''$.
Recall that $b=\rho(g_1)-g_1$.
We then find that $g_1-g''$ is $\rho$-invariant.  
Hence,  $g_1-g''\in C_{ h}$ and therefore $g_1\in  R_{h,\exp}$. This completes the proof. 

\bibliographystyle{alpha}
\bibliography{biblio}
\end{document}